\documentclass[11pt]{amsart}
\usepackage{geometry}

\usepackage{tikz-cd}
\usepackage[all]{xy}
\usepackage{amsthm,amsmath,amssymb,amsfonts}
\usepackage{mathrsfs}
\usepackage{bbm}

\theoremstyle{plain}

\newtheorem{theorem}{Theorem}
\newtheorem{lemma}[theorem]{Lemma}

\newtheorem{proposition}[theorem]{Proposition}
\newtheorem{corollary}[theorem]{Corollary}

\numberwithin{theorem}{section}
\numberwithin{equation}{theorem}

\theoremstyle{definition}
\newtheorem{definition}[theorem]{Definition}

\newtheorem{example}[theorem]{Example}
\newtheorem{remark}[theorem]{Remark}
\newtheorem{question}[theorem]{Question}

\newtheorem*{question*}{Question}

\def\Hom{\mathrm{Hom}}

\makeatletter

\begin{document}

\title{The weak bialgebra structures on $\Bbbk^{\oplus n}$}
\author{Jingheng Zhou}

\address{Jingheng Zhou: School of Mathematics, 
Shanghai University of Finance and Economics, Shanghai 200433, 
Shanghai, China}

\email{zhoujingheng@mail.shufe.edu.cn}

\maketitle

{\noindent{\bf Abstract:} Weak bialgebras are natural generalizations of bialgebras that play a fundamental role in quantum groups, tensor categories, and representation theory. In this paper, we systematically classify all (weak) bialgebra structures on the  finite‑dimensional semisimple algebra $\Bbbk^{\oplus n}$, with a distinguished basis of primitive orthogonal idempotents. For the bialgebra case, we prove that such structures are in bijection with finite monoids, and that bialgebra isomorphisms correspond exactly to monoid isomorphisms. Extending the result to weak bialgebras, we show that every weak bialgebra structure uniquely determines a finite small category.
 }
 \\
\noindent{\bf Key Words:} weak bialgebra, quiver, monoid, small category \\
\noindent{\bf MSC (2020):} 16T10, 16G20, 16T05, 18M05\\

\section{Introduction}

Bialgebras and Hopf algebras are fundamental algebraic structures situated at the intersection of algebra, geometry, and mathematical physics. They provide a natural framework for the study of quantum groups, representation theory, and tensor categories. A classical bialgebra over a field $\Bbbk$ is both an associative algebra and a coassociative coalgebra, subject to the compatibility conditions that the comultiplication and counit are algebra morphisms and that the unit is a coalgebra morphism.

A natural generalization, known as a \emph{weak bialgebra}, was introduced by Böhm, Nill, and Szlachányi in their foundational work on weak Hopf algebras\cite{BNS1999}. In a weak bialgebra, the compatibility conditions between the algebra and coalgebra structures are weakened: the coproduct is multiplicative but need not preserve the unit, and the counit satisfies a weaker set of axioms. Schauenburg pointed out that these objects are $\times_R$-bialgebras, and conversely, that any $\times_R$-bialgebra with a separable basis $R$ is a weak bialgebra\cite{Schauenburg2001}. Indeed, it was shown by Etingof, Nikshych, and Ostrik that every multi-fusion category is equivalent to the category of modules over a regular semisimple weak Hopf algebra\cite{ENO2005}. The theory of weak bialgebras has since developed into a rich area of research, with connections to monoidal categories, comodule algebras and so on \cite{BCJ2011,WWW2022}.

A fundamental method for understanding these structures in concrete terms is through the path algebra of a quiver, whose representation theory and algebraic properties are well understood. The path space \(\Bbbk Q\) of a quiver \(Q\) admits a natural algebra structure via path combination. Dually, Chin and Montgomery introduced a natural coalgebra structure on the same underlying space, the path coalgebra, whose comultiplication splits paths at any vertex\cite{CM1997}. Unfortunately, those two structures are not compatible in general. They do not form a bialgebra in the usual sense. One classical weak bialgebra structure on 
$\Bbbk Q$ is given by the point-wise tensor product, i.e. $\Delta(p)=p\otimes p$ for every path $p$, but this is not the only one.
This leads naturally to the following fundamental question:
\begin{question}\label{xxque1.1}
     Can we classify all weak bialgebra structures on $\Bbbk Q$ for a given 
     quiver $Q$?
 \end{question}
 
Significant progress towards this question has been made from two complementary directions. On the one hand, Cibils and Rosso introduced Hopf quivers, which yield Hopf algebras whose underlying coalgebras are path coalgebras\cite{CR2002}; this idea was later extended to the weak setting by Ahmed and Li\cite{AL2011}. 
Quiver methods have since been widely used in the study of pointed Hopf algebras and tensor categories\cite{HY2016, Crawford2023, MT2019}. 
On the other hand, Herschend characterised when a partitioning map $\Delta:\Bbbk Q\to \Bbbk Q\otimes \Bbbk Q$ is an algebra map\cite{He2008}. 
Huang and Torrecillas verified that the path coalgebra of an arbitrary quiver admits natural bialgebra structures \cite{HT2013}. 
More recently, Zhang and Zhou classified all weak bialgebra structures on $\Bbbk Q$, where $Q$ is an acyclic quiver with two vertices\cite{ZZ2022}. 
Despite these advances, a complete classification for general quivers remains open, and the present work aims to contribute to this endeavour.

In this paper, we study weak bialgebra structures on the commutative algebra $B = \Bbbk^{\oplus n}$, the direct sum of $n$ copies of the base field $\Bbbk$, with the standard pointwise multiplication. 
In fact, $B$ can be regarded as the path algebra for a quiver with $n$ vertices and no arrows.
This algebra has a distinguished basis $\{e_1,\dots,e_n\}$ of primitive orthogonal idempotents. For bialgebras, it is classical that a bialgebra structure on $\Bbbk^{\oplus n}$ corresponds precisely to a monoid structure on the index set $\{1,\dots,n\}$: the comultiplication is given by
\[
\Delta(e_k) = \sum_{i\cdot j = k} e_i \otimes e_j,
\]
where $\cdot$ is the monoid multiplication, and the counit is $\varepsilon(e_k) = \delta_{1k}$ for the identity element $1$. In particular, the number of isomorphism classes of bialgebra structures on $\Bbbk^{\oplus n}$ equals the number of isomorphism classes of monoids with $n$ elements.

For weak bialgebras, the situation is more complicated. Let $m = \varepsilon(1_B)$, and $m$ may not be $1$ under this situation, so the set of idempotents with counit $1$ forms a subset of size $m$, and the remaining idempotents have counit $0$. 
As a consequence of the general theory, weak bialgebra structures on $\Bbbk^{\oplus n}$ correspond bijectively to finite categories having exactly $m$ objects and $n$ morphisms, where the $m$ counital idempotents correspond to the identity morphisms and the remaining $n-m$ basis elements correspond to non-identity morphisms. 
The coproduct encodes the decomposition of a morphism into composable pairs.

The paper is organized as follows. In Section 2, we recall some basic definitions of bialgebras and weak bialgebras. In Section 3, we prove the correspondence between bialgebra structures on $\Bbbk^{\oplus n}$ and monoids. In Section 4, we establish the correspondence between weak bialgebra structures and finite categories. In Section 5, we give the complete classification for $n=3$, including the Grothendieck rings of the bialgebra cases and the explicit weak bialgebra structures. 

Throughout, we work over an arbitrary base field $\Bbbk$. We refer the reader to \cite{Montgomery1993,Sweedler1969} for general information about coalgebras and bialgebras.

\section{Preliminaries}
In this section, we will recall some basic definitions.
\begin{definition}\cite[Section 3.1]{Sweedler1969}
\label{xxdef2.1}Let $(B, m, 1 )$ be an algebra and $(B, \Delta, \varepsilon)$ be a coalgebra. If one of the following conditions 
are satisfied, then $(B,m,1,\Delta,\varepsilon)$ is called a bialgebra and denoted simply $B$.
\begin{enumerate}
\item[(a)]
$m, 1$ are coalgebra maps.
\item[(b)]
$\Delta, \varepsilon$ are algebra maps.
\end{enumerate}
\end{definition}

If $B$ is a bialgebra, then we have 
\[\Delta(1)=1\otimes 1 \text{ and } \Delta(gh)=\Delta(g)\Delta(h).\]
If we remove the condition $\Delta(1)=1\otimes 1$, then $\Delta$ is no longer an algebra map. Thus, we call $\Delta$ the prealgebra map. A prealgebra map is an algebra map if and only if $\Delta(1)=1\otimes 1$.

The next is about the definition of a weak bialgebra.

\begin{definition} \cite[Definition 2.1]{BNS1999}
\label{def2.2}
A {\it weak bialgebra} is a vector space $B$ over the
base field $\Bbbk$ with the structures of
\begin{enumerate}
\item[(a)]
an associative algebra $(B, m, 1 )$ with multiplication
$m: B\otimes B\to B$ and unit $1 \in B$, and
\item[(b)]
a coassociative coalgebra $(B, \Delta, \varepsilon)$ with
comultiplication $\Delta: B\to B\otimes B$ and couint
$\varepsilon: B\to \Bbbk$
\end{enumerate}
satisfying the following conditions.
\begin{enumerate}
\item[(i)]
The comultiplication $\Delta: B\to B\otimes B$ is a
prealgebra morphism.
\item[(ii)]
The unit and counit satisfy
\begin{equation*}
(\Delta(1 )\otimes 1 )(1 \otimes \Delta(1 ))
=(\Delta\otimes Id) \Delta(1 )
=(1 \otimes \Delta(1 ))(\Delta(1 )\otimes 1 )
\end{equation*}
and
\begin{equation*}
\varepsilon( xyz)=\sum\varepsilon(x y_{(1)})\varepsilon(y_{(2)}z)=
\sum\varepsilon(x y_{(2)})\varepsilon(y_{(1)}z),
\end{equation*}
where $\Delta(y)=\displaystyle\sum y_{(1)}\otimes y_{(2)}$ is the Sweedler notation.
\end{enumerate}
\end{definition}
We refer to \cite{BCJ2011,BNS1999,NTV2003,NV2002} for many other basic definitions related to weak bialgebras
and weak Hopf algebras.

The next is about quiver, see \cite{ASS2006}.
\begin{definition}[\cite{ASS2006}]
A quiver $Q = (Q_0, Q_1, s, t)$ is a quadruple consisting
of two sets: $Q_0$ (whose elements are called points, or vertices) and $Q_1$
(whose elements are called arrows), and two maps $s, t : Q_1 \to Q_0$ which
associate to each arrow $\alpha \in Q_1$ its source $s(\alpha) \in Q_0$ and its target $t(\alpha) \in Q_0$, respectively.  
\end{definition}

A path of length
$l \geq 1$ with source $a_0$ and target $a_l$ (or, more briefly, from $a_0$ to $a_l$) is a sequence
\[(a_0|\alpha_1,\alpha_2, \cdots,\alpha_l|a_l),\]
satisfying $s(\alpha_1)=a_0,t(\alpha_i)=s(\alpha_{i+1})=a_i$ for $i=1,\cdots,l-1$ and $t(\alpha_l)=a_l$.
For each point $a \in Q_0$, a path of length 0 is called a trivial path at $a$, denoted by $e_a$. Let $\Bbbk Q$ be the linear space spanned by all paths of $Q$, then we can define the product of two paths by the composition of two paths (if they can not be composed, the product is define to 0). This is the well-known path algebra. The dual coalgebra is defined as follows. For any path $(a_0|\alpha_1,\alpha_2, \cdots,\alpha_l|a_l)$,
\[\Delta(p)=e_{a_0}\otimes p+\sum_{i=1}^{l-1} 
(a_0|\alpha_1,\alpha_2, \cdots,\alpha_i|a_i)\otimes (a_i|\alpha_{i+1},\alpha_{i+2}, \cdots,\alpha_l|a_l)+p\otimes e_{a_l},\]
and $\varepsilon(p)=\delta_{0l}$ where $l$ is the length of $p$. 

To address Question\ref{xxque1.1}, we first consider some simple examples. For example, can we classify all weak bialgebra structures on $\Bbbk^{\oplus n}$ or when $Q$ only has two vertices? In 2022, Zhang and Zhou answered the question when $Q$ is an acyclic quiver and only has two vertices\cite{ZZ2022}.

\begin{proposition}\cite[Lemma 7.5, Proposition 7.7]{ZZ2022}\label{xxlem2.3}
Suppose $Q$ is the quiver with two vertices $\{1,2\}$ and
$w$ arrows from $1$ to $2$. Let $A$ be the
path algebra $\Bbbk Q$. Then there are 5 types of weak bialgebra
structures on $A$ up to equivalences.
\begin{enumerate}
\item[(a)]
$\Delta(e_1)=e_1\otimes e_1, \Delta(e_2)=e_2\otimes e_2+e_1\otimes e_2
+e_2\otimes e_1$, $\varepsilon(e_1)=1, \varepsilon(e_2)=0$, and for any
arrow $r$ from 1 to 2, $\Delta(r)= e_1\otimes r+r\otimes e_1$ and
$\varepsilon(r)=0$.
\item[(b)]
$\Delta(e_1)=e_1\otimes e_1+e_2\otimes e_2, \Delta(e_2)=e_2\otimes e_1
+e_1\otimes e_2$, $\varepsilon(e_1)=1, \varepsilon(e_2)=0$,
and for any arrow $r$ from 1 to 2, $\Delta(r)=r\otimes e_1
+e_1\otimes r$ and $\varepsilon(r)=0$.
\item[(c)]
$\Delta(e_2)=e_2\otimes e_2, \Delta(e_1)=e_1\otimes e_1+e_1\otimes e_2
+e_2\otimes e_1$, $\varepsilon(e_2)=1, \varepsilon(e_1)=0$, and for any
arrow $r$ from 1 to 2, $\Delta(r)= e_2\otimes r+r\otimes e_2$ and
$\varepsilon(r)=0$.
\item[(d)]
$\Delta(e_2)=e_1\otimes e_1+e_2\otimes e_2, \Delta(e_1)=e_2\otimes e_1
+e_1\otimes e_2$, $\varepsilon(e_2)=1, \varepsilon(e_1)=0$,
and for any arrow $r$ from 1 to 2, $\Delta(r)=r\otimes e_2
+e_2\otimes r$ and $\varepsilon(r)=0$.
\item[(e)]
$\Delta(e_i)=e_i\otimes e_i$, $\varepsilon(e_i)=1$ for $i=1,2$,
and the Jacobson radical $J$ of $A$ is a subcoalgebra of $A$.
\end{enumerate}
\end{proposition}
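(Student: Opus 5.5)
We need to classify weak bialgebra structures on $A=\Bbbk Q$, where $Q$ has two vertices $1,2$ and $w$ arrows from $1$ to $2$. The plan is to determine $\Delta$ and $\varepsilon$ on the idempotents first, and then on the arrows.

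\textbf{Step 1: reduce to the semisimple quotient.} Let $J$ be the radical, spanned by the arrows. Since $\Delta$ is multiplicative, $\Delta(e_i)=\Delta(e_i)\Delta(e_i)$ is an idempotent of $A\otimes A$. I will show that $J$ is a coideal, i.e.
\[\Delta(J)\subseteq J\otimes A+A\otimes J, \qquad \varepsilon(J)=0.\]
For the coideal property: every arrow $r$ satisfies $r=e_1re_2$ and $r^2=0$, so $\Delta(r)=\Delta(e_1)\Delta(r)\Delta(e_2)$ is nilpotent. For the counit: the counit axiom applied to $x=e_1$, $y=r$, $z=e_2$ should force $\varepsilon(r)=0$. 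Granting this, $A/J\cong\Bbbk^{\oplus 2}$ inherits a weak bialgebra structure.

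\textbf{Step 2: classify $\Bbbk^{\oplus 2}$ directly.} Put $\Delta(e_k)=\sum c^k_{ij}e_i\otimes e_j$. Multiplicativity forces each $c^k_{ij}\in\{0,1\}$ and forces the supports for different $k$ to be disjoint. Coassociativity, counitality, and the unit/counit axioms of Definition \ref{def2.2} then turn this into a composition law $i\cdot j=k$ on a category. By the correspondence announced in the introduction, a weak bialgebra on $\Bbbk^{\oplus 2}$ is the same as a category with two morphisms. Such a category either has one object, so it is a monoid of order 2 (the group $\mathbb Z/2$ or the semilattice $\{1,0\}$), or it has two objects and is discrete. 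The choice of which vertex is the identity gives the four cases (a)--(d); the discrete case gives (e), where $\Delta(e_i)=e_i\otimes e_i$. I would verify these directly by a small case check on $\varepsilon(1)\in\{1,2\}$.

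\textbf{Step 3: lift to the arrows.} This is the main obstacle, since $\Delta(r)$ is constrained only up to $J$. Write
\[\Delta(r)=\Delta(e_1)\Delta(r)\Delta(e_2)\in \Delta(e_1)(A\otimes A)\Delta(e_2).\]
In case (a), $\Delta(e_1)=e_1\otimes e_1$, so $\Delta(r)\in e_1Ae_2\otimes e_1Ae_2+\dots$; I would expand on the basis of paths and impose counitality with $\varepsilon(e_1)=1$, $\varepsilon(e_2)=\varepsilon(r)=0$. This should force $\Delta(r)=e_1\otimes r+r\otimes e_1$ up to a linear change of the arrows, and coassociativity should kill any $J\otimes J$ terms. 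Cases (b)--(d) are handled similarly. In case (e), $\Delta(r)\in e_1Ae_2\otimes e_1Ae_2$ plus terms in $e_1\otimes e_2$-type corners; this yields exactly that $J$ is a subcoalgebra. Any residual cross terms would have to be removed by an automorphism of $A$ fixing the $e_i$, which is why the classification holds only up to equivalence.

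Finally, each of the five structures can be checked directly against the axioms of Definition \ref{def2.2}.
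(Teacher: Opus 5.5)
\textbf{There is a genuine gap, starting in Step 1.} The paper does not prove this proposition; it quotes it from Zhang--Zhou. So your argument has to stand on its own.

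The first half of Step 1, $\Delta(J)\subseteq J\otimes A+A\otimes J$, is fine. You should add that $(A\otimes A)/(J\otimes A+A\otimes J)\cong\Bbbk^{4}$ is reduced, so the nilpotent element $\Delta(r)$ lies in that ideal.

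The second half, $\varepsilon(J)=0$, is false, and it contradicts case (e) of the statement itself. A nonzero subcoalgebra $D$ cannot lie in $\ker\varepsilon$, because $d=\sum\varepsilon(d_{(1)})d_{(2)}$ with $d_{(1)}\in D$. Concretely, take $w=1$ and set $\Delta(p)=p\otimes p$, $\varepsilon(p)=1$ for $p\in\{e_1,e_2,r\}$. This is a weak bialgebra of type (e) with $\varepsilon(r)=1$. Your triple $x=e_1$, $y=r$, $z=e_2$ only gives $\varepsilon(r)=\varepsilon(r)^2$ there. Hence $J$ is not a coideal in general, $\varepsilon$ does not descend to $A/J$, and the reduction to $\Bbbk^{\oplus 2}$ fails exactly in the case meant to produce (e).

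\textbf{The passage from $A/J$ back to $A$ is also missing.} Even where the reduction works, Step 2 only determines $\Delta(e_i)$ modulo $J\otimes A+A\otimes J$. Step 3, however, starts from exact identities such as $\Delta(e_1)=e_1\otimes e_1$. Radical terms really do occur. Let $\psi(x)=(1-r)x(1+r)$ and transport the structure above along $\psi$, i.e.\ $\Delta'=(\psi\otimes\psi)\Delta\psi^{-1}$ and $\varepsilon'=\varepsilon\psi^{-1}$. This isomorphic weak bialgebra has $\Delta'(e_1)=e_1\otimes e_1+e_1\otimes r+r\otimes e_1$ and $\varepsilon'(e_2)=2$. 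An automorphism $\phi$ fixing $e_1,e_2$ cannot remove these terms, since it only replaces $r$ by $\phi(r)\neq 0$. You need automorphisms that move the idempotents, i.e.\ a lifting-of-idempotents argument compatible with $\Delta$, and you do not supply one.

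\textbf{Step 3's key claim is false.} You rightly flag Step 3 as the main obstacle, but it is only a sketch, and coassociativity does not kill $J\otimes J$ terms. Take $w=1$ with $\Delta(e_1)$, $\Delta(e_2)$, $\varepsilon$ exactly as in (a), but set $\Delta(r)=e_1\otimes r+r\otimes e_1+\lambda\, r\otimes r$.
\begin{itemize}
\item For $\lambda\neq 0$, the elements $1$, $e_1$, $g=e_1+\lambda r$ form a basis of grouplikes.
\item This basis is closed under multiplication: $e_1g=g$, $ge_1=e_1$, $g^2=g$.
\item So this is the monoid bialgebra of a three-element monoid, and every axiom holds.
\end{itemize}
The coalgebra of (a) has only the two grouplikes $1$ and $e_1$. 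Therefore the two structures are not isomorphic: no automorphism of $A$, whether or not it fixes the $e_i$, removes the $\lambda\, r\otimes r$ term.

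So if ``equivalence'' means isomorphism, Step 3 cannot be completed as you describe. Besides repairing Step 1, you would have to pin down the coarser notion of equivalence or type used in the cited source. One candidate is prescribing $\Delta(r)$ only modulo $J\otimes J$, as case (e) already implicitly does. You would then need to prove the classification relative to that notion.
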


\section{The bialgebra structures on $\Bbbk^{\oplus n}$}

In this section, we will explore the bialgebra structures on $\Bbbk^{\oplus n}$.
When $n=2$, by Proposition \ref{xxlem2.3}, if
there are no arrows form vertex 1 to 2, then there are only two bialgebra structures up to equivalent on 
$\Bbbk^{\oplus 2}$. 

For the usual case, let $B=\Bbbk^{\oplus n}=\Bbbk e_1\oplus\Bbbk e_2\oplus\cdots\oplus\Bbbk e_n$, and fix the algebra structure on $B$ where $\{e_1,\cdots,e_n\}$ are primitive orthogonal idempotents in $B$,
then we will find all the coalgebra structures on the bialgebra $B$.

The first lemma is about the counit of the primitive orthogonal idempotents $e_1,\cdots,e_n$.
\begin{lemma}\label{xxlem3.1}
One of $\varepsilon(e_1),\cdots,\varepsilon(e_n)$ is equal to one while others are zero.
\end{lemma}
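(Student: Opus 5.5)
In the bialgebra setting of this section, the counit $\varepsilon\colon B\to\Bbbk$ is an algebra map (Definition \ref{xxdef2.1}(b)). The plan is to use only this fact, together with the relations $e_ie_j=\delta_{ij}e_i$ and $1_B=e_1+\cdots+e_n$.

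First I apply $\varepsilon$ to $e_i^2=e_i$. This gives $\varepsilon(e_i)^2=\varepsilon(e_i)$, so each $\varepsilon(e_i)$ lies in $\{0,1\}$, because $\Bbbk$ is a field and has no nontrivial idempotents. Next, for $i\neq j$ I apply $\varepsilon$ to $e_ie_j=0$. This gives $\varepsilon(e_i)\varepsilon(e_j)=0$, so at most one index $i$ can have $\varepsilon(e_i)=1$.

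Finally, unitality of the algebra map $\varepsilon$ gives $1=\varepsilon(1_B)=\sum_{i=1}^n\varepsilon(e_i)$. Since every term is $0$ or $1$ and at most one term is nonzero, exactly one term equals $1$. The characteristic of $\Bbbk$ does not matter here, because at most one summand is nonzero.

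There is no real obstacle. The one point to be careful about is the source of $\varepsilon(1_B)=1$: it comes from the bialgebra axiom that $\varepsilon$ is a unital algebra map. This is exactly the condition that fails in the weak case of Section 4, where $\varepsilon(1_B)=m$ may exceed $1$.
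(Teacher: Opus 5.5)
Your proof is correct and follows the paper's argument: idempotence of $e_i$ gives $\varepsilon(e_i)\in\{0,1\}$, and unitality of $\varepsilon$ gives $\sum_i\varepsilon(e_i)=1$. You add one step the paper omits, namely applying $\varepsilon$ to $e_ie_j=0$ for $i\neq j$, and that step is needed.

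The paper goes straight from ``each $\varepsilon(e_i)\in\{0,1\}$ and $\sum_i\varepsilon(e_i)=1$'' to ``exactly one equals $1$''. That inference only works when the sum in $\Bbbk$ determines how many terms equal $1$. If $\mathrm{char}\,\Bbbk=p>0$ and $n\geq p+1$, then having $p+1$ of the $\varepsilon(e_i)$ equal to $1$ is also consistent with those two facts.

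The paper explicitly works over an arbitrary field. Your orthogonality step, which shows at most one term is nonzero, is what makes the lemma independent of the characteristic, as your final remark on the characteristic says.
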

\begin{proof}
    For each $i\in$ $\{1,2,\cdots,n\}$, since 
    $\varepsilon(e_i)=\varepsilon(e_i^2)=\varepsilon(e_i)^2$,
    then $\varepsilon(e_i)=1$ or $0$.
    
    Notice that the unit $1=e_1+e_2+\cdots+e_n$ in $B$, and $\varepsilon(1)=1$, then
    \[\varepsilon(1)=\varepsilon(e_1)+\varepsilon(e_2)+\cdots+\varepsilon(e_n)=1.\]
    Hence exactly one of the $\varepsilon(e_i)$ is equal to $1$ while others are $0$.
\end{proof}

By symmetry of $e_1, e_2,\cdots, e_n$, we may assume that
$\varepsilon(e_1)=1$, $\varepsilon(e_2)=\cdots=\varepsilon(e_n)=0$. 
Below we will determine the coalgebra structures on $B$ under this assumption.

Let \[
\Delta(e_k)=\sum_{i,j=1}^n a_{ij}^k e_i\otimes e_j,
\]
the following lemmas are about the coefficients $\{a_{ij}^k\}$.

\begin{lemma}\label{xxlem3.2}
    For each $a_{ij}^k$, $a_{ij}^k=0$ or $1$.
\end{lemma}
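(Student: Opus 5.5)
The plan is to exploit that $\Delta$ is an algebra map in the bialgebra setting, so that $\Delta$ sends idempotents to idempotents. Since $e_k^2=e_k$, we get $\Delta(e_k)=\Delta(e_k)^2$. The elements $e_i\otimes e_j$ with $1\le i,j\le n$ form a basis of $B\otimes B\cong \Bbbk^{\oplus n^2}$ consisting of primitive orthogonal idempotents: indeed $(e_i\otimes e_j)(e_{i'}\otimes e_{j'})=\delta_{ii'}\delta_{jj'}\,e_i\otimes e_j$. Squaring $\Delta(e_k)=\sum_{i,j}a_{ij}^k e_i\otimes e_j$ therefore gives $\sum_{i,j}(a_{ij}^k)^2 e_i\otimes e_j$.

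Comparing coefficients in this basis yields $(a_{ij}^k)^2=a_{ij}^k$ for all $i,j$, so each $a_{ij}^k$ is $0$ or $1$, since $\Bbbk$ is a field and has no zero divisors. This is the same argument as in Lemma \ref{xxlem3.1}, now applied to the algebra $B\otimes B$ instead of $B$.

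The only point needing care is that we only need multiplicativity $\Delta(gh)=\Delta(g)\Delta(h)$, not $\Delta(1)=1\otimes 1$. So the argument holds verbatim for prealgebra maps, and it will also apply later in the weak bialgebra case. I expect no real obstacle: the key step is simply recognising that $B\otimes B$ again has $\{e_i\otimes e_j\}$ as a complete set of orthogonal idempotents, so the multiplication there is coordinatewise.
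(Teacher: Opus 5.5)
Your proposal is correct and matches the paper's proof: you square $\Delta(e_k)$ using multiplicativity, use that the $e_i\otimes e_j$ are orthogonal idempotents in $B\otimes B$, and compare coefficients to get $(a_{ij}^k)^2=a_{ij}^k$. Your remark that only multiplicativity is used, not $\Delta(1)=1\otimes 1$, is also how the paper justifies reusing this argument for Lemma \ref{xxlem4.1} in the weak case.
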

\begin{proof}
    Since $\Delta(e_k)=\Delta(e_k^2)=(\Delta(e_k))^2$,
    then we have  
    \begin{align*}
    \sum_{i,j=1}^n a_{ij}^k e_i\otimes e_j &=\Delta(e_k)
    =(\Delta(e_k))^2
    =\left(\sum_{i,j=1}^n a_{ij}^k e_i\otimes e_j\right)^2\\
    &=\sum_{i,j=1}^n (a_{ij}^k)^2 e_i^2\otimes e_j^2
    =\sum_{i,j=1}^n (a_{ij}^k)^2 e_i\otimes e_j.
    \end{align*}

    Thus, $a_{ij}^k=(a_{ij}^k)^2$, i.e. $a_{ij}^k=1$ or $0$.   
\end{proof}

\begin{lemma}\label{xxlem3.3}
    For each pair $(i,j)$, there is exactly one $k\in\{1,2,\cdots,n\}$ such that the coefficient $a_{ij}^k=1$, and all other coefficients are $0$.
\end{lemma}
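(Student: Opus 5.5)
We prove existence and uniqueness of $k$ separately. Throughout we use only that $\Delta$ is an algebra map in the bialgebra setting of Definition \ref{xxdef2.1}, together with Lemma \ref{xxlem3.2}.

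For uniqueness we use orthogonality of the idempotents. If $k\neq l$, then $e_ke_l=0$. Hence
\[
0=\Delta(e_ke_l)=\Delta(e_k)\Delta(e_l)=\sum_{i,j=1}^n a_{ij}^k a_{ij}^l\, e_i\otimes e_j .
\]
Here we used that $(e_i\otimes e_j)(e_{i'}\otimes e_{j'})=\delta_{ii'}\delta_{jj'}\,e_i\otimes e_j$. The elements $e_i\otimes e_j$ form a basis of $B\otimes B$, so $a_{ij}^k a_{ij}^l=0$ for every pair $(i,j)$ and all $k\neq l$. Combined with Lemma \ref{xxlem3.2}, this means that for fixed $(i,j)$ at most one $k$ can have $a_{ij}^k=1$.

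For existence we use the unit condition. In a bialgebra $\Delta(1)=1\otimes 1$, and $1=e_1+\cdots+e_n$, so
\[
\sum_{k=1}^n\sum_{i,j=1}^n a_{ij}^k\, e_i\otimes e_j=\Delta(1)=1\otimes 1=\sum_{i,j=1}^n e_i\otimes e_j .
\]
Comparing the coefficients of $e_i\otimes e_j$ gives $\sum_k a_{ij}^k=1$ for each pair $(i,j)$. Each $a_{ij}^k$ lies in $\{0,1\}$ and at most one is nonzero, so exactly one equals $1$.

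There is one subtle point. The relation $\sum_k a_{ij}^k=1$ holds in the field $\Bbbk$, which may have positive characteristic. The uniqueness step, however, already shows that at most one summand is nonzero. The sum is therefore either $0$ or $1$, and it equals $1$, so there is no characteristic issue. This is why we prove uniqueness before existence rather than counting directly. Apart from this ordering, the argument is a direct comparison of coefficients.
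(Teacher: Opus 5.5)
Your proof is correct. Its existence half is exactly the paper's argument: the paper's whole proof expands $\Delta(1)=1\otimes 1=\sum_{i,j}e_i\otimes e_j$ as $\sum_k\Delta(e_k)$ and reads off that each $e_i\otimes e_j$ appears in exactly one $\Delta(e_k)$. What you add is the separate uniqueness step via $\Delta(e_k)\Delta(e_l)=\Delta(e_ke_l)=0$ for $k\neq l$, and this step is not cosmetic.

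The paper passes from $\sum_k a_{ij}^k=1$ with $a_{ij}^k\in\{0,1\}$ directly to ``exactly one'', which is really a count in $\mathbb{Z}$. In a field $\Bbbk$ of characteristic $p$ with $n\geq p+1$, that relation alone does not rule out $p+1$ coefficients equal to $1$. So the one-line argument is complete only in characteristic $0$ (or for $n\leq p$), even though the paper works over an arbitrary field. Proving ``at most one'' first, as you do, closes this gap.

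Your ordering also buys two further things. First, the uniqueness half uses only multiplicativity of $\Delta$, not $\Delta(1)=1\otimes 1$. It therefore holds verbatim for weak bialgebras, where it is exactly what makes the composition $y\circ x:=z$ in Section 4 well defined. Second, $a_{ij}^ka_{ij}^l=0$ already forces at most one nonzero coefficient, and $\sum_k a_{ij}^k=1$ then forces that coefficient to equal $1$. So you do not actually need Lemma \ref{xxlem3.2} for this statement. The paper's version is shorter, but it relies on these points tacitly.
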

\begin{proof}
Notice that $\Delta(1)=1\otimes 1=\displaystyle\sum_{i,j=1}^n e_i\otimes e_j$, and $\Delta(1)=\Delta(e_1)+\Delta(e_2)+\cdots+\Delta(e_3)$, then each $e_i\otimes e_j$ exactly appears in one $\Delta (e_k)$. 
\end{proof}

\begin{lemma}\label{xxlem3.4}
    For each $i,k$, $a_{1i}^k=a_{i1}^k=\delta_{ik}$.
\end{lemma}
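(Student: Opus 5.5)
We use the counit axiom $(\varepsilon\otimes \mathrm{Id})\Delta=\mathrm{Id}=(\mathrm{Id}\otimes\varepsilon)\Delta$, together with the normalization $\varepsilon(e_1)=1$, $\varepsilon(e_j)=0$ for $j\neq 1$ fixed after Lemma \ref{xxlem3.1}. Expanding $\Delta(e_k)=\sum_{i,j}a_{ij}^k e_i\otimes e_j$, we get
\[
e_k=(\varepsilon\otimes \mathrm{Id})\Delta(e_k)=\sum_{i,j}a_{ij}^k\varepsilon(e_i)e_j=\sum_{j}a_{1j}^k e_j .
\]
Since $\{e_1,\dots,e_n\}$ is a basis, comparing coefficients gives $a_{1j}^k=\delta_{jk}$ for all $j,k$. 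This is exactly the first half of the claim, after renaming $j$ as $i$.

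Symmetrically,
\[
e_k=(\mathrm{Id}\otimes\varepsilon)\Delta(e_k)=\sum_{i,j}a_{ij}^k e_i\varepsilon(e_j)=\sum_i a_{i1}^k e_i ,
\]
and comparing coefficients gives $a_{i1}^k=\delta_{ik}$. Together these prove $a_{1i}^k=a_{i1}^k=\delta_{ik}$.

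Neither Lemma \ref{xxlem3.2} nor Lemma \ref{xxlem3.3} is needed here. The identity follows directly from the counit axiom, and it is consistent with Lemma \ref{xxlem3.3}: for each $i$, the pair $(1,i)$ (and likewise $(i,1)$) lands in exactly one $\Delta(e_k)$, namely $k=i$. The only point needing care is the normalization $\varepsilon(e_1)=1$, which was fixed without loss of generality right after Lemma \ref{xxlem3.1}.
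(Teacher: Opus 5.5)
Your proof is correct and takes the same approach as the paper: both derive the lemma directly from the counit axioms $(\varepsilon\otimes \mathrm{id})\Delta(e_k)=(\mathrm{id}\otimes\varepsilon)\Delta(e_k)=e_k$ under the normalization $\varepsilon(e_j)=\delta_{1j}$. Your explicit coefficient comparison simply writes out the step the paper states in one line.
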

\begin{proof}
    Since $(\varepsilon\otimes \mathrm{id})\Delta(e_k)=(\mathrm{id}\otimes\varepsilon)\Delta(e_k)=e_k$, 
    we will have the terms $e_i\otimes e_1$ and $e_1\otimes e_i$ only appear in $\Delta(e_i)$.
\end{proof}

By the lemma \ref{xxlem3.3}, for every pair $(i,j)$, we can find a unique $k$ corresponding to this pair. Thus, we define an operator "$\cdot$" on the set $S=\{1,2,\cdots, n\}$ as follows:
\[\forall i,j\in S, \text{ let } i\cdot j=k \text{ if } a_{ij}^k=1. \]

Lemma \ref{xxlem3.4} means that for any $i\in S$,
\[i\cdot 1=1\cdot i=i,\]
which means $1$ is the identity in $(S,\cdot)$.

\begin{lemma}\label{xxlem3.5}
    For all $k,p,q,j\in\{1,2,\cdots,n\}$, we have 
    \begin{align*}\tag{3-1}\label{eq3-1}
    \sum_{i=1}^n  a_{pq}^i a_{ij}^k = \sum_{i=1}^n a_{pi}^k a_{qj}^i.
    \end{align*}
\end{lemma}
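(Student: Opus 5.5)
This identity is coassociativity of $\Delta$ written out in the basis $\{e_p\otimes e_q\otimes e_j\}$ of $B\otimes B\otimes B$. The plan is to expand both sides of $(\Delta\otimes \mathrm{id})\Delta(e_k)=(\mathrm{id}\otimes\Delta)\Delta(e_k)$ using the structure constants $a_{ij}^k$. Then compare coefficients, which is legitimate because $\{e_p\otimes e_q\otimes e_j\}_{p,q,j}$ is a basis of $B^{\otimes 3}$.

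For the left side, start from $\Delta(e_k)=\sum_{i,j}a_{ij}^k e_i\otimes e_j$ and apply $\Delta\otimes\mathrm{id}$. Substituting $\Delta(e_i)=\sum_{p,q}a_{pq}^i e_p\otimes e_q$ gives
\[
(\Delta\otimes\mathrm{id})\Delta(e_k)=\sum_{p,q,j}\Bigl(\sum_{i}a_{pq}^i a_{ij}^k\Bigr)e_p\otimes e_q\otimes e_j .
\]

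For the right side, write $\Delta(e_k)=\sum_{p,i}a_{pi}^k e_p\otimes e_i$ and apply $\mathrm{id}\otimes\Delta$. Substituting $\Delta(e_i)=\sum_{q,j}a_{qj}^i e_q\otimes e_j$ gives
\[
(\mathrm{id}\otimes\Delta)\Delta(e_k)=\sum_{p,q,j}\Bigl(\sum_{i}a_{pi}^k a_{qj}^i\Bigr)e_p\otimes e_q\otimes e_j .
\]

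Since $(B,\Delta,\varepsilon)$ is a coalgebra, these two expressions are equal. Equating the coefficients of $e_p\otimes e_q\otimes e_j$ yields (\ref{eq3-1}) for all $k,p,q,j$.

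The only care needed is keeping the summation indices straight, in particular renaming the dummy index in the right-hand expansion so that both sides are indexed by the same triple $(p,q,j)$. No earlier lemma beyond the definition of $a_{ij}^k$ is required.
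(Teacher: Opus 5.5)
Your proposal is correct and follows essentially the same route as the paper: you expand both sides of $(\Delta\otimes\mathrm{id})\Delta(e_k)=(\mathrm{id}\otimes\Delta)\Delta(e_k)$ in the basis $\{e_p\otimes e_q\otimes e_j\}$ and compare coefficients. Your index bookkeeping on the right-hand side, $\sum_{p,i}a_{pi}^k e_p\otimes\Delta(e_i)$, is in fact cleaner than the paper's, whose outer sum is mislabeled as $\sum_{j,i}$.
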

\begin{proof}
    From the coassociativity $(\Delta\otimes \mathrm{id})\Delta(e_k)=(\mathrm{id}\otimes\Delta)\Delta(e_k)$, we have
    \begin{align*}
    \text{LHS}&=\sum_{i,j=1}^n a_{ij}^k \Delta(e_i)\otimes e_j
    =\sum_{i,j=1}^n a_{ij}^k \left(\sum_{p,q=1}^n a_{pq}^i e_p\otimes e_q\right)\otimes e_j\\
    &=\sum_{p,q,j=1}^n \left(\sum_{i=1}^n a_{ij}^k a_{pq}^i\right) e_p\otimes e_q\otimes e_j,\\
    \text{RHS}&=\sum_{p,i=1}^n a_{pi}^k e_p\otimes \Delta(e_i)
    =\sum_{j,i=1}^n a_{pi}^k e_p\otimes \left(\sum_{q,j=1}^n a_{qj}^i e_q\otimes e_j\right)\\
    &=\sum_{j,p,q=1}^n \left(\sum_{i=1}^n a_{pi}^k a_{qj}^i\right) e_p\otimes e_q\otimes e_j.
    \end{align*}
    Comparing coefficients yields:
    \begin{equation*}
    \sum_{i=1}^n  a_{pq}^i a_{ij}^k = \sum_{i=1}^n a_{pi}^k a_{qj}^i.
    \end{equation*}
\end{proof}
By Lemma \ref{xxlem3.3}, for every pair $(i,j)$, we can find a unique $k$ corresponding to this pair. Thus, we define a
 binary operator "$\cdot$" on the set $S=\{1,2,\cdots, n\}$ as follows:
\[\forall i,j\in S, \text{ let } i\cdot j=k \text{ if } a_{ij}^k=1. \]

Lemma \ref{xxlem3.4} means that for any $i\in S$,
\[i\cdot 1=1\cdot i=i,\]
which means $1$ is the identity in $(S,\cdot)$. Meanwhile, 
Lemma \ref{xxlem3.5} tells us that 
the operator "$\cdot$" on $S$ is associative, thus $(S,\cdot)$ is a monoid and we have the following theorem.

\begin{theorem}\label{xxthm3.6}
    Let $S=\{1,2,\cdots,n\}$, and $B=\Bbbk^{\oplus n}$ be a commutative algebra over $\Bbbk$ with primitive orthogonal idempotents $\{e_1,e_2,\cdots, e_n\}$. 
    Then $S$ is a monoid if and only if 
    $B$ is a bialgebra.
\end{theorem}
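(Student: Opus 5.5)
We read the statement as asserting a correspondence: bialgebra structures on $B$ (with the given algebra structure) correspond to monoid structures on $S$ via $\Delta(e_k)=\sum_{i\cdot j=k}e_i\otimes e_j$ and $\varepsilon(e_k)=\delta_{1k}$. Here $1$ is the identity, after the relabelling permitted by Lemma \ref{xxlem3.1}. We prove the two directions separately.

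\emph{($B$ a bialgebra $\Rightarrow$ $S$ a monoid.)} Lemmas \ref{xxlem3.2} and \ref{xxlem3.3} show that for each pair $(i,j)$ there is a unique $k$ with $a_{ij}^k=1$, so $i\cdot j=k$ is a well-defined binary operation. Lemma \ref{xxlem3.4} shows that $1$ is a two-sided identity. For associativity, fix $p,q,j$ and set $k=(p\cdot q)\cdot j$.
\begin{itemize}
\item[\textbullet] The left side of (\ref{eq3-1}) equals $\sum_i a_{pq}^i a_{ij}^k=a_{p\cdot q,\,j}^k=1$, because only $i=p\cdot q$ contributes.
\item[\textbullet] The right side equals $\sum_i a_{pi}^k a_{qj}^i=a_{p,\,q\cdot j}^k$, because only $i=q\cdot j$ contributes.
\end{itemize}
Lemma \ref{xxlem3.5} says these agree, so $a_{p,\,q\cdot j}^k=1$. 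By uniqueness in Lemma \ref{xxlem3.3}, $p\cdot(q\cdot j)=k=(p\cdot q)\cdot j$.

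\emph{($S$ a monoid $\Rightarrow$ $B$ a bialgebra.)} Given a monoid structure on $S$ with identity $1$, define $\Delta(e_k)=\sum_{i\cdot j=k}e_i\otimes e_j$ and $\varepsilon(e_k)=\delta_{1k}$. We check the axioms of Definition \ref{xxdef2.1}(b) in turn.
\begin{enumerate}
\item \emph{Coassociativity.} Both $(\Delta\otimes\mathrm{id})\Delta(e_k)$ and $(\mathrm{id}\otimes\Delta)\Delta(e_k)$ equal $\sum_{p\cdot q\cdot j=k}e_p\otimes e_q\otimes e_j$, by associativity of the monoid.
\item \emph{Counit.} We have $(\varepsilon\otimes\mathrm{id})\Delta(e_k)=\sum_{1\cdot j=k}e_j=e_k$, and symmetrically $(\mathrm{id}\otimes\varepsilon)\Delta(e_k)=e_k$.
\item \emph{$\varepsilon$ is an algebra map.} Since $\varepsilon(e_ie_j)=\delta_{ij}\delta_{1i}=\delta_{1i}\delta_{1j}$, $\varepsilon$ is multiplicative, and $\varepsilon(1_B)=\sum_k\varepsilon(e_k)=1$.
\item \emph{$\Delta$ is multiplicative.} The sets $\{(i,j): i\cdot j=k\}$ for different $k$ are disjoint, and the $e_i\otimes e_j$ are orthogonal idempotents. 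Hence $\Delta(e_k)\Delta(e_l)=\delta_{kl}\Delta(e_k)=\Delta(e_ke_l)$.
\item \emph{$\Delta$ preserves the unit.} The sets $\{(i,j): i\cdot j=k\}$ partition $S\times S$, so $\Delta(1_B)=\sum_{i,j}e_i\otimes e_j=1\otimes 1$.
\end{enumerate}

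Finally, the two constructions are mutually inverse: the coefficients $a_{ij}^k$ recover the multiplication and conversely. This gives the bijection between bialgebra structures and monoid structures.

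The only step needing real care is associativity in the first direction. That amounts to reading (\ref{eq3-1}) correctly with all coefficients in $\{0,1\}$, where each sum collapses to a single term. A minor point to address is the normalization that the identity element is $1$, which is justified by the relabelling after Lemma \ref{xxlem3.1}.
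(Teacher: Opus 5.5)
Your proposal is correct and follows the same route as the paper. The forward direction rests on Lemmas \ref{xxlem3.2}--\ref{xxlem3.5}, and the converse defines $\Delta(e_k)=\sum_{i\cdot j=k}e_i\otimes e_j$ and $\varepsilon(e_k)=\delta_{1k}$ and then checks the axioms. The paper calls these steps ``straightforward''; you make them explicit, namely the collapse of the two sides of (\ref{eq3-1}) to the single coefficients $a_{p\cdot q,\,j}^k$ and $a_{p,\,q\cdot j}^k$, and the verification of each bialgebra axiom in the converse.
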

\begin{proof}
When $B$ is a bialgebra, we have proved that $S$ has a monoid structure from $B$.

Conversely, if $S$ is a monoid, without loss of generality, let $1$ be the unit. Then for any $e_i(i=2,3,\cdots, n)$, define the counit $\varepsilon$ and the comultiplications $\Delta$ as follows:
\[
    \varepsilon(e_i)=\delta_{1i}, \quad 
    \Delta(e_i)=\sum_{k\cdot l=i} e_k\otimes e_l.
\]
It's straightforward to verify that 
\begin{align*}
    &(\varepsilon\otimes id)\Delta=id=(id\otimes \varepsilon)\Delta,\\
    &(\Delta\otimes id)\Delta=(id \otimes\Delta)\Delta,
\end{align*}
and both $\Delta$ and $\varepsilon$ are algebra maps. Therefore, $B$ is a bialgebra.
\end{proof}

The next theorem tells us the number of bialgebra 
structures on $B$ is equal to the number of monoids with n elements.

\begin{theorem}\label{xxthm3.7}
    Let $B_1$ and $B_2$ be two bialgebra structures on $B$, and $S_1$, $S_2$
    be the corresponding monoid. Then 
    $B_1$ is isomorphic to $B_2$ as bialgebra if and only if $S_1$ is isomorphic to $S_2$.
\end{theorem}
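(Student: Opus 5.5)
The plan is to prove both directions by identifying bialgebra isomorphisms with permutations of the idempotent basis. The key input is a standard fact about the algebra $\Bbbk^{\oplus n}$: every algebra automorphism $\phi$ of $B$ permutes the primitive orthogonal idempotents. Indeed, $\phi(e_i)$ is again a nonzero idempotent. Writing $\phi(e_i)=\sum_j c_{ij}e_j$ with $c_{ij}\in\{0,1\}$, the relations $\phi(e_i)\phi(e_k)=0$ for $i\neq k$ and $\sum_i\phi(e_i)=1$ force the supports of the $\phi(e_i)$ to partition $\{1,\dots,n\}$ into $n$ nonempty pieces. Hence each $\phi(e_i)$ is a single $e_{\sigma(i)}$ for some permutation $\sigma$ of $S$. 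So a bialgebra isomorphism $B_1\to B_2$, being in particular an algebra automorphism of $B$, is of the form $e_i\mapsto e_{\sigma(i)}$.

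($\Rightarrow$) Let $\phi\colon B_1\to B_2$ be a bialgebra isomorphism with underlying permutation $\sigma$. Write $\Delta_1(e_k)=\sum_{i\cdot_1 j=k}e_i\otimes e_j$, as in Lemma \ref{xxlem3.3} and Theorem \ref{xxthm3.6}.
\begin{itemize}
\item The coalgebra condition $(\phi\otimes\phi)\Delta_1=\Delta_2\phi$ gives
\[
\sum_{i\cdot_1 j=k}e_{\sigma(i)}\otimes e_{\sigma(j)}=\sum_{p\cdot_2 q=\sigma(k)}e_p\otimes e_q .
\]
Comparing coefficients, $i\cdot_1 j=k$ if and only if $\sigma(i)\cdot_2\sigma(j)=\sigma(k)$. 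Thus $\sigma(i\cdot_1 j)=\sigma(i)\cdot_2\sigma(j)$.
\item The condition $\varepsilon_2\phi=\varepsilon_1$, together with Lemma \ref{xxlem3.1}, shows that $\sigma$ sends the identity of $S_1$ (the unique index with counit $1$) to the identity of $S_2$.
\end{itemize}
Hence $\sigma$ is a monoid isomorphism $S_1\to S_2$.

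($\Leftarrow$) Given a monoid isomorphism $\sigma\colon S_1\to S_2$, define $\phi(e_i)=e_{\sigma(i)}$. This is clearly an algebra automorphism of $B$. The same coefficient computation, read backwards, shows $(\phi\otimes\phi)\Delta_1=\Delta_2\phi$. Since $\sigma$ preserves identities and $\varepsilon_r(e_i)=\delta_{1_r,i}$ by Theorem \ref{xxthm3.6}, we also get $\varepsilon_2\phi=\varepsilon_1$.

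The only step requiring real care is the first: that algebra automorphisms of $\Bbbk^{\oplus n}$ permute the $e_i$. It is elementary, but it must be stated explicitly, since everything else is a direct coefficient comparison. One should also be careful that "isomorphic as bialgebras" here means an isomorphism between two structures on the same underlying algebra, so the algebra part of $\phi$ is an automorphism of $B$.
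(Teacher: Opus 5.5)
Your proposal is correct and follows the same route as the paper. An isomorphism sends $e_i\mapsto e_{\sigma(i)}$ for a bijection $\sigma$. Comparing $(\phi\otimes\phi)\Delta_1$ with $\Delta_2\phi$ then makes $\sigma$ multiplicative, and $\varepsilon_2\phi=\varepsilon_1$ forces $\sigma$ to send the identity to the identity. You also fill in two points the paper only asserts: why an algebra automorphism of $\Bbbk^{\oplus n}$ must permute the $e_i$ (your support-partition argument), and the explicit check of the converse direction.
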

\begin{proof}
    If $B_1$ is isomorphic to $B_2$, let $f:B_1\to B_2$ be the isomorphism, then for any $i\in S$, $f(e_i)$
    still is a primitive orthogonal idempotent, and let it be $e_{\sigma(i)}$. 

    We claim that $\sigma$ is a monoid isomorphism between $S_1$ and $S_2$. Firstly, $\sigma$ is a bijection. If $f(e_i)=f(e_j)$, then 
    $f(e_i)=f(e_i)f(e_j)=f(e_i e_j)=\delta_{ij}f(e_i)$. Therefore $i=j$, and $\sigma$ is a bijection since $S$ is finite.

    Secondly, since $f$ is a bialgebraic isomorphism,
    $\Delta_2 \circ f=(f\otimes f)\circ \Delta_1 $. For $i,j\in S_1$, let $k=i\cdot j$, which means 
    $e_i\otimes e_j$ appears in $\Delta_1(e_k)$. Thus $f(e_i)\otimes f(e_j)$ appears in $\Delta_2(f(e_k))$,
    i.e. $e_{\sigma(i)}\otimes e_{\sigma(j)}$ appears in $\Delta_2(e_{\sigma(k)})$. Therefore
    \[\sigma(i)\sigma(j)=\sigma(k)=\sigma(i\cdot j).\]

    Lastly, if $i_0$ is the identity in $S_1$,
    then $\varepsilon(e_{i_0})=1$. Thus, 
    \[\varepsilon_2(e_{\sigma(i_0)})=\varepsilon_2(f(e_{i_0}))=\varepsilon_1(e_{i_0})=1,\]
    which means $\sigma(i_0)$ is the identity in $S_2$.

    As a result, $\sigma$ is a monoid isomorphism.

    Conversely, if $\sigma$ is a monoid isomorphism, we can also construct a bialgebraic isomorphism from $B_1$ to $B_2$.
\end{proof}

\begin{example}
    When $n=2$, there are two possible monoid structures on $S=\{1,2\}$ if $1$
    is the identity. Thus, up to isomorphic,
    there are two different bialgebra structures on $\Bbbk^{\oplus 2}.$
\end{example}

\section{The weak bialgebra structures on $\Bbbk^{\oplus n}$}

In this section, we turn our attention to weak bialgebra structures. Similarly, for the primitive orthogonal idempotents, let \[
\Delta(e_k)=\sum_{i,j=1}^n a_{ij}^k e_i\otimes e_j.
\]

Similar to Lemma \ref{xxlem3.1} and Lemma \ref{xxlem3.2}, we have the following lemmas.

\begin{lemma}\label{xxlem4.1}
    For each $a_{ij}^k$, $a_{ij}^k=0$ or $1$.
\end{lemma}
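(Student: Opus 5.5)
The plan is to repeat the argument of Lemma \ref{xxlem3.2}, checking that it only uses multiplicativity of $\Delta$ on products, not $\Delta(1)=1\otimes 1$. In a weak bialgebra, condition (i) of Definition \ref{def2.2} says that $\Delta$ is a prealgebra morphism, so $\Delta(xy)=\Delta(x)\Delta(y)$ still holds for all $x,y\in B$. Only the unit condition is dropped, and Lemma \ref{xxlem3.2} never used it.

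First, since $e_k$ is an idempotent in $B$, I apply $\Delta$ to $e_k=e_k^2$ and use multiplicativity to get $\Delta(e_k)=\Delta(e_k)^2$. Next I expand $\Delta(e_k)=\sum_{i,j}a_{ij}^k e_i\otimes e_j$. The algebra $B\otimes B$ is again commutative, and the elements $e_i\otimes e_j$ form a basis of primitive orthogonal idempotents, since $(e_i\otimes e_j)(e_{i'}\otimes e_{j'})=\delta_{ii'}\delta_{jj'}\,e_i\otimes e_j$. Hence squaring kills all cross terms, and
\[
\Delta(e_k)^2=\sum_{i,j}(a_{ij}^k)^2\,e_i\otimes e_j .
\]

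Comparing coefficients in the basis $\{e_i\otimes e_j\}$ then gives $(a_{ij}^k)^2=a_{ij}^k$. Since $\Bbbk$ is a field, this forces $a_{ij}^k\in\{0,1\}$.

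There is no real obstacle here. The only point needing care is to justify that the weak axioms still give $\Delta(e_k^2)=\Delta(e_k)^2$, which is exactly the prealgebra-morphism condition (i).
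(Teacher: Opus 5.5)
Your proposal is correct and matches the paper's proof: the paper likewise notes that Lemma~\ref{xxlem3.2} uses only multiplicativity of $\Delta$ (the prealgebra-morphism condition), not $\Delta(1)=1\otimes 1$. Multiplicativity gives $\Delta(e_k)=\Delta(e_k)^2$, and hence $(a_{ij}^k)^2=a_{ij}^k$.
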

\begin{proof}
    Since $\Delta$ is a prealgebra morphism,
    the proof is similar to Lemma \ref{xxlem3.2}.
\end{proof}

\begin{lemma}\label{xxlem4.2}
    For each $e_k$, $\varepsilon(e_k)=0$ or $1$.
\end{lemma}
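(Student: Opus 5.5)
In the weak setting we no longer know that $\varepsilon$ is multiplicative, so the argument of Lemma \ref{xxlem3.1} does not transfer directly. The replacement is the counit axiom of Definition \ref{def2.2}(ii),
\[
\varepsilon(xyz)=\sum\varepsilon(xy_{(1)})\varepsilon(y_{(2)}z),
\]
specialized to suitable elements. The plan is to choose $x=z=1$ and $y=e_k$. This gives
\[
\varepsilon(e_k)=\sum_{i,j}a^k_{ij}\,\varepsilon(e_i)\varepsilon(e_j).
\]
On its own this only relates the different $\varepsilon(e_k)$, so I will use the axiom differently.

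Instead I take $x=e_k$, $y=1$, $z=e_k$. Then $xyz=e_k^2=e_k$, and the axiom becomes
\[
\varepsilon(e_k)=\sum\varepsilon(e_k 1_{(1)})\,\varepsilon(1_{(2)}e_k).
\]
Here $\Delta(1)=\sum_k\Delta(e_k)=\sum_{i,j}c_{ij}\,e_i\otimes e_j$ with $c_{ij}=\sum_k a^k_{ij}$.

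I also need $\varepsilon(e_ke_i)=\delta_{ki}\varepsilon(e_k)$, which holds because $e_ke_i=\delta_{ki}e_k$ in $B$. The right-hand side therefore collapses to
\[
\varepsilon(e_k)=c_{kk}\,\varepsilon(e_k)^2 .
\]
So the only remaining task is to show $c_{kk}=1$ whenever $\varepsilon(e_k)\neq 0$.

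For this I use the counit property $(\varepsilon\otimes \mathrm{id})\Delta(e_k)=e_k$. Reading off the coefficient of $e_k$ gives
\[
\sum_i a^k_{ik}\,\varepsilon(e_i)=1 .
\]
This alone is not quite enough, so the key extra input is that $\Delta(1)$ is idempotent: $\Delta$ is a prealgebra map, so $\Delta(1)^2=\Delta(1)$. Hence each $c_{ij}\in\{0,1\}$, which is the same computation as in Lemma \ref{xxlem4.1} applied to $\Delta(1)$. Then $c_{kk}\in\{0,1\}$:

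\begin{itemize}
\item If $c_{kk}=0$, the identity above forces $\varepsilon(e_k)=0$.
\item If $c_{kk}=1$, it gives $\varepsilon(e_k)=\varepsilon(e_k)^2$, so $\varepsilon(e_k)\in\{0,1\}$.
\end{itemize}

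Either way the lemma follows.

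The main obstacle is choosing the right instance of the weak counit axiom so that the orthogonality of the $e_i$ kills all cross terms. The choice $x=z=e_k$, $y=1$ does exactly that. After it, the idempotence of $\Delta(1)$ makes the remaining coefficient harmless, so I expect no further difficulty.
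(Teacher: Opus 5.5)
Your proof is correct and is essentially the paper's. The paper applies the same weak counit axiom with $x=y=z=e_k$ (i.e.\ $\varepsilon(e_k)=\varepsilon(e_k^3)$) to get $\varepsilon(e_k)=a^k_{kk}\,\varepsilon(e_k)^2$, and cites Lemma \ref{xxlem4.1} for $a^k_{kk}\in\{0,1\}$. Your choice $y=1$ yields the coefficient $c_{kk}$ of $\Delta(1)$ instead and needs the same idempotence computation applied to $\Delta(1)$; the detours through $x=z=1$ and through $(\varepsilon\otimes\mathrm{id})\Delta(e_k)=e_k$ are unnecessary.
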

\begin{proof}
    Since $\varepsilon(xyz)=\sum\varepsilon(x y_{(1)})\varepsilon(y_{(2)}z)=
    \sum\varepsilon(x y_{(2)})\varepsilon(y_{(1)}z),$
    then
    \[\varepsilon(e_k)=\varepsilon(e_k^3)
    =\sum_{i,j=1}^n a_{ij}^k\varepsilon(e_k e_i)\varepsilon(e_j e_k)=a_{kk}^k \varepsilon(e_k)^2.\]
    By Lemma \ref{xxlem4.1}, $a_{kk}^k=0$ or $1$, thus $\varepsilon(e_k)=0$ or $1$.
\end{proof}

By Lemma \ref{xxlem4.2}, there exists a positive integer $m$ such that $\varepsilon(1)=m$. In fact, the dimensions of source counital subalgebra $B_s$ and target counital subalgebra $B_t$ both are $m$. 
Without loss of generality, let 
\[\varepsilon(e_1)=\cdots=\varepsilon(e_m)=1,\]while $\varepsilon(e_{m+1})=\cdots=\varepsilon(e_n)=0$.

\begin{proposition}\label{xxpro4.3}
   Let $V=span\{e_{m+1},\cdots, e_n\}$, then $V$ is a coideal of $B$.
\end{proposition}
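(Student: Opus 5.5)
The plan is to check the two defining conditions of a coideal separately: $\varepsilon(V)=0$ and $\Delta(V)\subseteq V\otimes B+B\otimes V$. The first holds by the normalization fixed after Lemma \ref{xxlem4.2}, since $\varepsilon(e_{m+1})=\cdots=\varepsilon(e_n)=0$ and $\varepsilon$ is linear. All the work is in the second condition.

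For the second condition, note that $V\otimes B+B\otimes V$ is spanned by all $e_i\otimes e_j$ with $i>m$ or $j>m$. Its complement in the basis consists of the tensors $e_i\otimes e_j$ with $i,j\le m$. So it suffices to show: for every $k>m$ and all $i,j\le m$, we have $a_{ij}^k=0$.

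To get this, I will use only the counit axiom of the coalgebra $(B,\Delta,\varepsilon)$, not the weak compatibility conditions. Applying $(\varepsilon\otimes \mathrm{id})$ to $\Delta(e_k)$ gives
\[
e_k=(\varepsilon\otimes\mathrm{id})\Delta(e_k)=\sum_{i,j=1}^n a_{ij}^k\,\varepsilon(e_i)\,e_j=\sum_{j=1}^n\Big(\sum_{i=1}^m a_{ij}^k\Big)e_j .
\]
Comparing coefficients yields $\sum_{i=1}^m a_{ij}^k=\delta_{jk}$ for all $j,k$. By Lemma \ref{xxlem4.1} each $a_{ij}^k$ is $0$ or $1$. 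Hence if $a_{ij}^k=1$ for some $i\le m$, then necessarily $j=k$.

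Now take $k>m$ and suppose $a_{ij}^k=1$ with $i,j\le m$. The previous step forces $j=k$, so $k=j\le m$, contradicting $k>m$. Therefore $\Delta(e_k)\in V\otimes B+B\otimes V$ for each $k>m$, and by linearity $\Delta(V)\subseteq V\otimes B+B\otimes V$.

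I expect no serious obstacle. The one point needing care is the coefficient comparison: each identity $\sum_{i\le m}a_{ij}^k=\delta_{jk}$ holds in $\Bbbk$, possibly of positive characteristic. This is harmless, because we only need that a sum of $0/1$ terms equal to $0$ in $\Bbbk$ has all terms zero. If the characteristic $p$ divided the number of nonzero terms, this could fail. In that case I would instead use the symmetric identity from $(\mathrm{id}\otimes\varepsilon)$, namely $\sum_{j\le m}a_{ij}^k=\delta_{ik}$, together with the one above. If needed, I would also invoke the $\varepsilon(xyz)$ axiom, as in Lemma \ref{xxlem4.2}, to show that at most one term is nonzero.
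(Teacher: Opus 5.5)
Your treatment of $\varepsilon(V)=0$ and your main argument for $\Delta(V)\subseteq V\otimes B+B\otimes V$ are the paper's own argument. You apply $(\varepsilon\otimes\mathrm{id})$ to $\Delta(e_k)$ and use that the $a_{ij}^k$ are $0$ or $1$, so nothing can cancel. This is complete when $\operatorname{char}\Bbbk=0$ (or $\operatorname{char}\Bbbk>m$). As you suspect, it is not complete over an arbitrary field, which is the paper's stated setting; the paper's appeal to a \emph{positive coefficient} hides the same assumption. In characteristic $p$, the relation $\sum_{i\le m}a_{ij}^k=0$ only says that the number of contributing $i$ is divisible by $p$.

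Your first fallback, the symmetric counit identity, does not close this gap. The two counit identities, even together with coassociativity and multiplicativity of $\Delta$, do not imply the proposition in characteristic $2$. Take the semilattice $\{A,B,C,D\}$ with $X\cdot X=X$, $A\cdot B=A\cdot C=B\cdot C=D$, and $D$ absorbing. On $\Bbbk^{\oplus 4}$ put $\Delta(e_k)=\sum_{i\cdot j=k}e_i\otimes e_j$, $\varepsilon(e_A)=\varepsilon(e_B)=\varepsilon(e_C)=1$, $\varepsilon(e_D)=0$. If $\operatorname{char}\Bbbk=2$, this is a counital coalgebra; for instance $(\varepsilon\otimes\mathrm{id})\Delta(e_D)=2(e_A+e_B+e_C)+3e_D=e_D$. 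Moreover $\Delta$ is multiplicative with $\Delta(1)=1\otimes 1$, yet $e_A\otimes e_B$ occurs in $\Delta(e_D)$. The only axiom of Definition \ref{def2.2} it violates is the $\varepsilon(xyz)$ identity.

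So that identity is essential. Your last remark names the right tool, but it should replace the counting argument rather than merely show that \emph{at most one} term survives. Take $x=e_i$, $y=e_k$, $z=e_j$ with $i,j\le m<k$; the left side vanishes because $i\neq k$, and the identity gives
\[
0=\varepsilon(e_ie_ke_j)=\sum_{p,q}a_{pq}^k\,\varepsilon(e_ie_p)\,\varepsilon(e_qe_j)=a_{ij}^k\,\varepsilon(e_i)\,\varepsilon(e_j)=a_{ij}^k .
\]
This is exactly the required vanishing, in every characteristic and with no counting. With this line in place of the coefficient comparison, your proof is complete.
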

\begin{proof}
 We verify the two coideal conditions:

\begin{enumerate}
\item It is direct that \(\varepsilon(V)=0\).
\item We show \(\Delta(V)\subseteq V\otimes B+B\otimes V\).
\end{enumerate}

Take any \(k>m\) (so \(e_k\in V\)). Write
\[
\Delta(e_k)=\sum_{p,q=1}^n a_{pq}^k\, e_p\otimes e_q,
\]
where each \(a_{pq}^k\in\{0,1\}\). Suppose, for contradiction, that some term \(e_p\otimes e_q\) with \(p,q\le m\) appears in this sum, i.e. \(a_{pq}^k=1\).

Apply the counital axiom \((\varepsilon\otimes \mathrm{id})\Delta(e_k)=e_k\). On the left‑hand side, the term \(a_{pq}^k e_p\otimes e_q\) contributes
\[
a_{pq}^k\,\varepsilon(e_p)\,e_q = 1\cdot 1\cdot e_q = e_q,
\]
and since \(q\le m\), this gives a nonzero component in \(W=\operatorname{span}\{e_1,\dots,e_m\}\). Other terms may contribute further, but in any case the left‑hand side is a linear combination of basis vectors that includes \(e_q\) with a positive coefficient (because all coefficients are non‑negative and no cancellation can occur). However, the right‑hand side is \(e_k\), which lies in \(V\) and hence has no component in \(W\). This contradiction shows that no such term can exist.

Therefore, for every \(k>m\), the coproduct \(\Delta(e_k)\) contains no component in \(W\otimes W\). Hence each term of \(\Delta(e_k)\) has at least one factor outside \(W\), i.e. at least one factor in \(V\). Consequently,
\[
\Delta(e_k)\in V\otimes B+B\otimes V.
\]
Since the \(e_k\) with \(k>m\) form a basis of \(V\), we obtain
\[
\Delta(V)\subseteq V\otimes B+B\otimes V.
\]

Thus \(V\) is a coideal.
\end{proof}
\begin{corollary}
    $C=B/V$ is a coalgebra with each $\overline{e_i} (1\leq i\leq m)$ is a group-like element.
\end{corollary}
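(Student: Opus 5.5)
The plan has two parts: first make $C=B/V$ a quotient coalgebra, then compute $\Delta$ on the classes $\overline{e_i}$ for $i\le m$. For the first part I apply Proposition \ref{xxpro4.3}. Since $V$ is a coideal, the comultiplication and counit of $B$ descend to $C=B/V$. Concretely, with $\pi:B\to C$ the projection, I set $\overline{\Delta}(\pi(x))=(\pi\otimes\pi)\Delta(x)$ and $\overline{\varepsilon}(\pi(x))=\varepsilon(x)$. These are well defined because $\Delta(V)\subseteq V\otimes B+B\otimes V=\ker(\pi\otimes\pi)$ and $\varepsilon(V)=0$. Coassociativity and counitality then follow from those of $B$, by applying $\pi^{\otimes 3}$ and $\pi$. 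As a vector space, $C$ has basis $\overline{e_1},\dots,\overline{e_m}$.

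For the second part I fix $k\le m$ and write $\Delta(e_k)=\sum a^k_{pq}e_p\otimes e_q$ with $a^k_{pq}\in\{0,1\}$, using Lemma \ref{xxlem4.1}. Applying $\pi\otimes\pi$ kills every term with $p>m$ or $q>m$, so
\[
\overline{\Delta}(\overline{e_k})=\sum_{p,q\le m}a^k_{pq}\,\overline{e_p}\otimes\overline{e_q}.
\]
The goal is to show that this sum equals $\overline{e_k}\otimes\overline{e_k}$.

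To identify the sum, I use the counit axiom $(\varepsilon\otimes\mathrm{id})\Delta(e_k)=e_k$. Since $\varepsilon(e_p)=1$ for $p\le m$ and $0$ otherwise, this gives $\sum_{p\le m}a^k_{pq}=\delta_{qk}$ for every $q$. The symmetric axiom $(\mathrm{id}\otimes\varepsilon)\Delta(e_k)=e_k$ gives $\sum_{q\le m}a^k_{pq}=\delta_{pk}$ for every $p$. All coefficients are nonnegative integers $0$ or $1$. So if some $a^k_{pq}=1$ with $p,q\le m$, the first identity forces $q=k$ and the second forces $p=k$. Hence the only surviving coefficient is $a^k_{kk}$.

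It remains to show that $a^k_{kk}=1$, which is the step needing care. Setting $q=k$ in the first identity gives $\sum_{p\le m}a^k_{pk}=1$. By the previous paragraph, $a^k_{pk}=0$ for $p\le m$ with $p\ne k$, so $a^k_{kk}=1$. Therefore $\overline{\Delta}(\overline{e_k})=\overline{e_k}\otimes\overline{e_k}$, and $\overline{\varepsilon}(\overline{e_k})=\varepsilon(e_k)=1$, so $\overline{e_k}$ is group-like. The only genuine subtlety is that no cancellation can occur, and this holds because all coefficients lie in $\{0,1\}$ by Lemma \ref{xxlem4.1}.
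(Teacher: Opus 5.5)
Your route is the same as the paper's: descend $\Delta$ and $\varepsilon$ through the coideal $V$ of Proposition~\ref{xxpro4.3}, then use the counit axiom with Lemma~\ref{xxlem4.1} to pin down $\overline{\Delta}(\overline{e_k})$. The first half is fine. The gap is the step you flag as the only subtlety, namely that no cancellation can occur because all coefficients lie in $\{0,1\}$.

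Your identities $\sum_{p\le m}a^k_{pq}=\delta_{qk}$ and $\sum_{q\le m}a^k_{pq}=\delta_{pk}$ hold in $\Bbbk$, and the paper works over an arbitrary field. If $\operatorname{char}\Bbbk=\ell\le m$, a sum of $\ell$ ones is $0$. Then $a^k_{pq}=1$ with $q\ne k$ does not contradict $\sum_{p'\le m}a^k_{p'q}=0$. Concretely, take characteristic $2$ and $m\ge 2$, pick $l\le m$ with $l\ne k$, and set $a^k_{kl}=a^k_{lk}=a^k_{ll}=1$ with all other $a^k_{pq}=0$, including $a^k_{kk}=0$. Both of your identities hold, yet the resulting $\overline{\Delta}(\overline{e_k})$ is not $\overline{e_k}\otimes\overline{e_k}$. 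So the data you invoke (Lemma~\ref{xxlem4.1} plus the two counit identities for $e_k$) do not force the conclusion unless $\operatorname{char}\Bbbk=0$ or $\operatorname{char}\Bbbk>m$. The paper's one-line proof, and its proof of Proposition~\ref{xxpro4.3}, rest on the same positivity reasoning, so this is a gap you share with it. The statement itself is not false; the argument just needs a different input.

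The repair is short and works in every characteristic. Apply the weak counit axiom $\varepsilon(xyz)=\sum\varepsilon(xy_{(1)})\varepsilon(y_{(2)}z)$ with $x=e_p$, $y=e_k$, $z=e_q$, as in the proof of Lemma~\ref{xxlem4.2}. Since $\varepsilon(e_pe_i)=\delta_{pi}\varepsilon(e_p)$, this gives
\[
a^k_{pq}\,\varepsilon(e_p)\,\varepsilon(e_q)=\delta_{pk}\,\delta_{qk}\,\varepsilon(e_k)\qquad\text{for all } p,q,k .
\]
For $p,q,k\le m$ this reads $a^k_{pq}=\delta_{pk}\delta_{qk}$. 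Hence $\overline{\Delta}(\overline{e_k})=\sum_{p,q\le m}a^k_{pq}\,\overline{e_p}\otimes\overline{e_q}=\overline{e_k}\otimes\overline{e_k}$, with no cancellation argument needed. For $k>m$ it gives $a^k_{pq}=0$ whenever $p,q\le m$. That is exactly the coideal property of Proposition~\ref{xxpro4.3}, now valid in every characteristic, which also secures the first half of your proof.
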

\begin{proof}
    Let \(\pi:B\to B/V\) be the canonical projection and $\overline{x}=\pi(x)$. Set
\[
\overline{\Delta}(\overline{x})=(\pi\otimes\pi)\Delta(x),\qquad 
\overline{\varepsilon}(\overline{x})=\varepsilon(x).
\]
Since $V$ is coideal, then $(C,\overline{\Delta},\overline{\varepsilon})$ is a coalgebra. For $1\leq k\leq m$,  $\overline{\varepsilon}(\overline{e_k})=1$, and 
\[\overline{\Delta}(\overline{e_k})=\sum_{i,j=1}^m a_{ij}^k \overline{e_i} \otimes \overline{e_j}.\]
Moreover, since each $a_{ij}^k$ equals $0$ or $1$, by the counital axiom, 
\[\overline{\Delta}(\overline{e_k})=\overline{e_k} \otimes \overline{e_k},\]
i.e. $\overline{e_k}$ is a group-like element in $C$.
\end{proof}

\begin{corollary}\label{xxcor4.5}
    If $V=0$, then $B$ is a weak bialgebra with each primitive orthogonal idempotent is a group-like element.
\end{corollary}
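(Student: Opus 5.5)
If $V=0$ then $m=n$, so $\varepsilon(e_k)=1$ for every $k$. The plan is to show directly that $\Delta(e_k)=e_k\otimes e_k$ for every $k$; at that point every $e_k$ is group-like. The weak bialgebra axioms are assumed throughout, so nothing further needs checking for $B$ itself. As a sanity check, pointwise multiplication makes $\Delta(e_ke_l)=\delta_{kl}e_k\otimes e_k=\Delta(e_k)\Delta(e_l)$, so these formulas are consistent with the prealgebra condition.

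The main step reuses the counit argument from the preceding Corollary, now with no quotient needed. Write $\Delta(e_k)=\sum_{i,j}a_{ij}^k e_i\otimes e_j$ with $a_{ij}^k\in\{0,1\}$ by Lemma \ref{xxlem4.1}. Since $\varepsilon(e_i)=1$ for all $i$, the counit axiom gives
\[
e_k=(\varepsilon\otimes \mathrm{id})\Delta(e_k)=\sum_{j}\Bigl(\sum_i a_{ij}^k\Bigr)e_j .
\]
Hence $\sum_i a_{ij}^k=\delta_{jk}$ as elements of $\Bbbk$.

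This is the one delicate point: the sums are in $\Bbbk$, so in positive characteristic a sum of $0/1$ coefficients may vanish without every term being $0$. The paper's earlier argument ("no cancellation can occur") glosses over this. To handle it, I would bring in the prealgebra condition and the multiplicativity of $\varepsilon$ on idempotent products. The axiom $\varepsilon(xyz)=\sum\varepsilon(xy_{(1)})\varepsilon(y_{(2)}z)$ with $x=e_p$, $y=1$, $z=e_q$ gives
\[
\varepsilon(e_pe_q)=\sum_{i,j}\Bigl(\sum_k a_{ij}^k\Bigr)\varepsilon(e_pe_i)\varepsilon(e_je_q)=\sum_k a_{pq}^k .
\]
The left side is $\delta_{pq}$. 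So for each pair $(p,q)$ with $p\neq q$, the set of $k$ with $a_{pq}^k=1$ has size divisible by $\mathrm{char}\,\Bbbk$, and for $p=q$ its size is $1$ modulo the characteristic.

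To make this exact, I would use orthogonality. For $k\neq l$, $0=\Delta(e_ke_l)=\Delta(e_k)\Delta(e_l)=\sum a_{ij}^ka_{ij}^l e_i\otimes e_j$. Therefore each pair $(i,j)$ occurs in at most one $\Delta(e_k)$, and every $\sum_k a_{pq}^k$ is genuinely $0$ or $1$ as an integer. Combined with the display above, $(p,q)$ occurs in some $\Delta(e_k)$ only when $p=q$.

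So $\Delta(e_k)=\sum_i a_{ii}^k e_i\otimes e_i$. The counit identity $\sum_i a_{ij}^k=\delta_{jk}$ then becomes $a_{jj}^k=\delta_{jk}$, which gives $\Delta(e_k)=e_k\otimes e_k$.

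The obstacle I expect is the characteristic issue just described. It is resolved by using $\Delta(e_k)\Delta(e_l)=0$ to prevent any basis tensor from being repeated across different $k$.
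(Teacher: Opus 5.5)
Your proof is correct. It shares the paper's final step, reading group-likeness off the counit identity, but you add an ingredient the paper does not use, and that addition matters.

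In the paper, this corollary is simply the case $V=0$ of the preceding corollary. There $B/V=B$, and group-likeness of each $e_k$ is read off from $\sum_i a_{ij}^k=\delta_{jk}$ (and its mirror image) using only $a_{ij}^k\in\{0,1\}$. That step tacitly assumes a sum of at most $n$ ones cannot vanish in $\Bbbk$, which holds in characteristic $0$ or characteristic larger than $n$.

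You proceed differently:
\begin{enumerate}
\item You first prove the global identity $\sum_k a_{pq}^k=\delta_{pq}$ from $\varepsilon(xz)=\sum\varepsilon(x1_{(1)})\varepsilon(1_{(2)}z)$.
\item You use $\Delta(e_k)\Delta(e_l)=0$ for $k\neq l$ to see that this sum is an honest integer in $\{0,1\}$.
\item You deduce that each $\Delta(e_k)$ is supported on the diagonal tensors $e_i\otimes e_i$.
\item Only then do you apply the counit, where no cancellation can occur.
\end{enumerate}

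This costs two extra axioms but works over every field, which the paper's standing assumption of an arbitrary $\Bbbk$ requires. The counit identities alone genuinely do not suffice: in characteristic $2$ with $n=2$, the element $e_1\otimes e_2+e_2\otimes e_1+e_2\otimes e_2$ satisfies both counit identities for $e_1$.
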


We now establish the precise passage from a weak bialgebra structure on \(B = k^{\oplus n}\) to a small category \(\mathcal{C}\). Let \(\{e_1, \dots, e_n\}\) be the basis of primitive orthogonal idempotents with \(e_i e_j = \delta_{ij} e_i\), and put \(m := \#\{i \mid \varepsilon(e_i)=1\}\). By Lemma 4.2, after relabelling we may assume
\[
\varepsilon(e_1)=\cdots=\varepsilon(e_m)=1,\qquad \varepsilon(e_{m+1})=\cdots=\varepsilon(e_n)=0.
\]

The category \(\mathcal{C}\) is reconstructed by the following steps.

 The objects of \(\mathcal{C}\) are the indices that are distinguished by the counit:
\[
\mathrm{Ob}(\mathcal{C}) := \{1,2,\dots,m\}.
\]
For each object \(i\), the identity morphism is defined to be the basis idempotent \(\mathrm{id}_i := e_i\).

 The set of all morphisms is identified with the whole basis \(\{e_1,\dots,e_n\}\). For a non-identity basis element \(e_k\) with \(k>m\), its source \(s(e_k)\) and target \(t(e_k)\) are uniquely determined by the counital axioms. Indeed, from
\[
(\varepsilon \otimes \mathrm{id})\Delta(e_k)=e_k
\]
we see that there is a unique \(s(e_k)\in \{1,\dots,m\}\) such that the coefficient of \(e_{s(e_k)} \otimes e_k\) in \(\Delta(e_k)\) equals \(1\). Similarly, from
\[
(\mathrm{id} \otimes \varepsilon)\Delta(e_k)=e_k
\]
there is a unique \(t(e_k)\in \{1,\dots,m\}\) such that the coefficient of \(e_k \otimes e_{t(e_k)}\) in \(\Delta(e_k)\) equals \(1\). Thus \(e_k\) is a morphism from \(s(e_k)\) to \(t(e_k)\), and we write
\[
e_k \in \mathrm{Hom}_{\mathcal{C}}\bigl(s(e_k),\, t(e_k)\bigr).
\]
For \(k\le m\), the element \(e_k\) is precisely the identity morphism at the object \(k\), i.e. it acts as a two-sided unit for composition.

 Let \(x\) and \(y\) be two morphisms such that \(t(x)=s(y)\). Their composition \(y\circ x\) is encoded by the coproduct: if \(y\otimes x\) appears as a summand in \(\Delta(z)\) for some basis element \(z\), then we define
\[
y\circ x := z.
\]
If no such summand exists, the composition is defined to be zero. 

 The identity laws are immediate consequences of the counit property. For \(i\le m\), the condition \((\varepsilon \otimes \mathrm{id})\Delta(e_i)=e_i\) forces \(\Delta(e_i)=e_i\otimes e_i\), hence
\[
e_i \circ e_i = e_i.
\]
Moreover, for any morphism \(e_k\), the defining properties of \(s(e_k)\) and \(t(e_k)\) imply that \(e_{t(e_k)}\circ e_k = e_k\) and \(e_k \circ e_{s(e_k)}=e_k\), so the identity morphisms behave as required.

Associativity of composition follows directly from the coassociativity of \(\Delta\). Indeed, the identity
\[
(\Delta \otimes \mathrm{id})\Delta(e_k) = (\mathrm{id} \otimes \Delta)\Delta(e_k)
\]
means that the two ways of inserting an intermediate object in the decomposition of \(e_k\) coincide. In terms of morphisms, this gives exactly
\[
(f\circ g)\circ h = f\circ (g\circ h)
\]
for any three composable morphisms \(f,g,h\). Thus all axioms of a small category are satisfied.

\begin{theorem}\label{thm:2}
There is a bijective correspondence between weak bialgebra structures on \(k^{\oplus n}\) (with the fixed pointwise multiplication) and small categories \(\mathcal{C}\) having exactly \(m = \#\{i \mid \varepsilon(e_i)=1\}\) objects and \(n\) morphisms, where the coproduct \(\Delta\) gives the decomposition of a morphism into composite pairs. Conversely, given any finite category \(\mathcal{C}\) with \(n\) morphisms, its category algebra \(k\mathcal{C}\) inherits a weak bialgebra structure by taking the morphisms as a basis, defining multiplication by composition (or zero if not composable), defining the coproduct by
\[
\Delta(f) = \sum_{\substack{g,h:\\ g\circ h = f}} g \otimes h,
\]
and defining the counit by \(\varepsilon(f)=1\) if \(f\) is an identity morphism and \(0\) otherwise.
\end{theorem}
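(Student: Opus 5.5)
The plan is to prove the two directions separately and then check that the constructions are mutually inverse. One point must be fixed first. The algebra structure in the statement has to be the fixed pointwise one on $\Bbbk^{\oplus n}$, with the morphisms of $\mathcal{C}$ as orthogonal idempotents, since that is the only algebra structure in play. The converse construction is therefore the ``function algebra'' $\Bbbk^{\mathrm{Mor}(\mathcal{C})}$ with the splitting coproduct $\Delta(f)=\sum_{g\circ h=f} g\otimes h$, and not the convolution category algebra. I would phrase the converse that way: the category algebra with product given by composition is not isomorphic to $\Bbbk^{\oplus n}$ in general, so it cannot be what is meant.

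\emph{From a weak bialgebra to a category.} The construction before the statement already gives objects, sources, targets and a partial composition. What remains is to check that composition is well defined and that it is defined exactly on composable pairs.

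1. \emph{Single-valuedness.} Since $\Delta$ is multiplicative, $\Delta(e_k)\Delta(e_l)=\Delta(e_ke_l)=0$ for $k\neq l$. The coefficients are $0/1$ by Lemma \ref{xxlem4.1}, so no $e_i\otimes e_j$ occurs in two different $\Delta(e_k)$. Hence $y\circ x$ is a single basis element whenever it is defined.

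2. \emph{Decompositions are composable.} If $e_i\otimes e_j$ occurs in $\Delta(e_k)$, I want $s(e_i)=t(e_j)$, $s(e_k)=s(e_j)$ and $t(e_k)=t(e_i)$. I would prove this by coassociativity: apply $(\Delta\otimes\mathrm{id})$ and $(\mathrm{id}\otimes\Delta)$ to $\Delta(e_k)$, and then use $\varepsilon$ on the middle leg together with the uniqueness of $s$ and $t$ from the counit axioms. The fact that $\Delta(e_a)=e_a\otimes e_a$ for $a\le m$ (Corollary above) controls the identity terms.

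3. \emph{Totality: every composable pair occurs.} If $t(e_j)=s(e_i)$, then $e_i\otimes e_j$ must occur in some $\Delta(e_k)$, i.e.\ in $\Delta(1)$. I expect this to be the main obstacle, because it is exactly where weak bialgebra axiom (ii) must be used. My first attempt would be the case $y=1$ of the counit axiom, $\varepsilon(xz)=\sum\varepsilon(x1_{(1)})\varepsilon(1_{(2)}z)$, together with axiom (ii) on $\Delta(1)$. These should identify $\Delta(1)$ with $\sum_{a\le m}\bigl(\sum_{s(x)=a}x\bigr)\otimes\bigl(\sum_{t(y)=a}y\bigr)$, up to the paper's ordering convention; this is the standard description $\Delta(1)\in B_s\otimes B_t$. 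From that, totality follows directly.

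Once steps 1--3 hold, the identity and associativity laws follow from the counit property and from coassociativity, as already sketched before the statement.

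\emph{From a category to a weak bialgebra, and bijectivity.} Given a finite category, define $\Delta$ and $\varepsilon$ on the idempotent basis as in the statement. The checks are as follows.

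1. Counitality and coassociativity reduce to the unit laws and associativity of composition.

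2. Multiplicativity holds because $\Delta(f)\Delta(f')=\delta_{ff'}\Delta(f)$: the decompositions of distinct morphisms are disjoint.

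3. $\Delta(1)=\sum_{g,h\ \text{composable}} g\otimes h$. The identity $(\Delta\otimes\mathrm{id})\Delta(1)=(\Delta(1)\otimes1)(1\otimes\Delta(1))$ then says that a triple is composable iff each adjacent pair is, which is true.

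4. $\varepsilon(fgh)$ is nonzero only if $f=g=h$ is an identity. The same holds for $\sum\varepsilon(fg_{(1)})\varepsilon(g_{(2)}h)$, since $g=g_{(1)}\circ g_{(2)}$ with both factors identities forces $g$ to be that identity. This gives the counit axiom.

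Finally, the two constructions are inverse to each other. A bialgebra structure is determined by the coefficients $a^k_{ij}$ and the values $\varepsilon(e_i)$, and a category is determined by its composition table and its identities; both directions read off the same data.
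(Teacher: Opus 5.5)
Your architecture is sound, and it is more complete than the paper's proof. The paper only cites the construction before the statement (source and target from counitality, $y\circ x:=z$ when $y\otimes x$ occurs in $\Delta(z)$, ``zero'' otherwise) and calls the converse straightforward.

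\begin{itemize}
\item The paper never uses axiom (ii) when passing from a weak bialgebra to a category, so it never shows that composable pairs compose. Your step 3 is exactly what is missing, and it is genuinely needed. On $\Bbbk^{\oplus 3}$, take $\varepsilon(e_1)=\varepsilon(e_2)=1$, $\varepsilon(e_3)=0$ and $\Delta(e_1)=e_1\otimes e_1$, $\Delta(e_2)=e_2\otimes e_2$, $\Delta(e_3)=e_1\otimes e_3+e_3\otimes e_1$. This is counital, coassociative and multiplicative, and it satisfies the counit identities of (ii). Yet $e_3$ would be an endomorphism of object $1$ with no $e_3\circ e_3$. Only the $\Delta(1)$ identity excludes it.
\item Your reading of the converse as the function algebra $\Bbbk^{\mathrm{Mor}(\mathcal{C})}$ is also right. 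With composition as the product, already for $\mathcal{C}=C_2$ one gets $\Delta(e)^2=2\Delta(e)\neq\Delta(e)$.
\end{itemize}

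Two points need fixing, however.

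\textbf{First:} the ``fact'' $\Delta(e_a)=e_a\otimes e_a$ for $a\le m$, which you plan to use in step 2, is false. The corollary after Proposition~\ref{xxpro4.3} gives it only modulo $V$. The sentence before the theorem asserting it in $B$ is wrong: in Theorem~\ref{thm:1}(1), $\Delta(e_1)=e_1\otimes e_1+e_2\otimes e_3+e_3\otimes e_2$. In general, $\Delta(\mathrm{id}_a)$ contains $g\otimes h$ for every factorization $g\circ h=\mathrm{id}_a$.

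Step 2 does not need this fact. Write $i\cdot j=k$ for $a^k_{ij}=1$, and let $\lambda(k),\rho(k)\le m$ satisfy $a^k_{\lambda(k)k}=1=a^k_{k\rho(k)}$. If $i\cdot j=k$, compare the coefficients of $e_{\lambda(i)}\otimes e_i\otimes e_j$ in coassociativity for $e_k$. By single-valuedness these coefficients are $a^k_{ij}=1$ and $a^k_{\lambda(i)k}$, so $\lambda(k)=\lambda(i)$; symmetrically, $\rho(k)=\rho(j)$. Note that applying $\varepsilon$ to the middle leg yields only a tautology, so coefficient comparison is what does the work.

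\textbf{Second:} uniqueness of $s,t$ ``from the counit axioms'' holds only in characteristic $0$. The identity $(\varepsilon\otimes\mathrm{id})\Delta(e_k)=e_k$ says only that the number of $a\le m$ with $a^k_{ak}=1$ equals $1$ in $\Bbbk$. The paper's ``no cancellation'' in Proposition~\ref{xxpro4.3} has the same defect, although it claims an arbitrary field.

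Your step-3 tools repair this, so do step 3 first.
\begin{itemize}
\item Write $\Delta(1)=\sum c_{ij}e_i\otimes e_j$, where $c_{ij}\in\{0,1\}$ by single-valuedness.
\item The case $y=1$ of the counit axiom gives $c_{ab}=\delta_{ab}$ for $a,b\le m$.
\item The $\Delta(1)$ axiom together with coassociativity gives $c_{ij}c_{jl}=\sum_k a^k_{ij}c_{kl}=\sum_k c_{ik}a^k_{jl}$.
\item If $a\cdot k=k=a'\cdot k$ with $a,a'\le m$, set $(i,j,l)=(a',a,k)$ to get $c_{a'a}=c_{a'k}=1$, hence $a=a'$. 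The map $\rho$ is handled the same way.
\item The same identities give $c_{\rho(i)l}=c_{il}$ and $c_{i\lambda(j)}=c_{ij}$. Hence $c_{ij}=1\iff\rho(i)=\lambda(j)$. This is precisely your formula for $\Delta(1)$, and it gives totality in every characteristic.
\item They also give $\lambda(a)=\rho(a)=a$ for $a\le m$. This is the identity law, obtained without any appeal to $\Delta(e_a)$.
\end{itemize}
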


\begin{proof}
We have already shown that every weak bialgebra on \(k^{\oplus n}\) gives a category. Conversely, starting from a finite category with morphism set \(\{f_1,\dots,f_n\}\), the vector space with basis \(\{f_i\}\) becomes a weak bialgebra with the structures described above. It is straightforward to verify that the weak monoidality axioms (2-1) and (2-2) are exactly the associativity and unit laws of the category. The two constructions are inverse to each other up to isomorphism.
\end{proof}

\section{Example: the weak bialgebra structures on $\Bbbk^{\oplus 3}$}
In this section, we will give an example when $n=3$. The next theorem gives all bialgebra structures on $\Bbbk^{\oplus 3}$.

\begin{theorem}\label{thm:1}
Let $B=\Bbbk^{\oplus3}=\Bbbk e_1\oplus\Bbbk e_2\oplus\Bbbk e_3$ be a bialgebra. Then $B$ is isomorphic to one of the following 7 bialgebras.
\begin{enumerate}
    \item[(1)] $\begin{aligned}[t]
    \varepsilon(e_1)&=1, \quad \Delta(e_1)=e_1\otimes e_1+e_2\otimes e_3+e_3\otimes e_2,\\
    \varepsilon(e_2)&=0,\quad \Delta(e_2)=e_1\otimes e_2+e_2\otimes e_1+e_3\otimes e_3,\\
    \varepsilon(e_3)&=0, \quad\Delta(e_3)=e_1\otimes e_3+e_3\otimes e_1+e_2\otimes e_2.
    \end{aligned}$
    \item[(2)] $\begin{aligned}[t]
    \varepsilon(e_1)&=1, \quad \Delta(e_1)=e_1\otimes e_1+e_2\otimes e_2,\\
    \varepsilon(e_2)&=0,\quad \Delta(e_2)=e_1\otimes e_2+e_2\otimes e_1,\\
    \varepsilon(e_3)&=0, \quad\Delta(e_3)=e_1\otimes e_3+e_3\otimes e_1+e_2\otimes e_3+e_3\otimes e_2+e_3\otimes e_3.
    \end{aligned}$
    \item[(3)]
    $\begin{aligned}[t]
    \varepsilon(e_1)&=1, \quad \Delta(e_1)=e_1\otimes e_1,\\
    \varepsilon(e_2)&=0,\quad \Delta(e_2)=e_1\otimes e_2+e_2\otimes e_1+e_2\otimes e_2+e_2\otimes e_3,\\
    \varepsilon(e_3)&=0, \quad\Delta(e_3)=e_1\otimes e_3+e_3\otimes e_1+e_3\otimes e_2+e_3\otimes e_3.
    \end{aligned}$
    \item[(4)]
    $\begin{aligned}[t]
    \varepsilon(e_1)&=1, \quad \Delta(e_1)=e_1\otimes e_1,\\
    \varepsilon(e_2)&=0,\quad \Delta(e_2)=e_1\otimes e_2+e_2\otimes e_1+e_2\otimes e_2+e_3\otimes e_2,\\
    \varepsilon(e_3)&=0, \quad\Delta(e_3)=e_1\otimes e_3+e_3\otimes e_1+e_2\otimes e_3+e_3\otimes e_3.
    \end{aligned}$
    \item[(5)]
    $\begin{aligned}[t]
    \varepsilon(e_1)&=1, \quad \Delta(e_1)=e_1\otimes e_1,\\
    \varepsilon(e_2)&=0,\quad \Delta(e_2)=e_1\otimes e_2+e_2\otimes e_1+e_2\otimes e_2+e_3\otimes e_2+e_2\otimes e_3,\\
    \varepsilon(e_3)&=0, \quad\Delta(e_3)=e_1\otimes e_3+e_3\otimes e_1+e_3\otimes e_3.
    \end{aligned}$
    \item[(6)]
    $\begin{aligned}[t]
    \varepsilon(e_1)&=1, \quad \Delta(e_1)=e_1\otimes e_1,\\
    \varepsilon(e_2)&=0,\quad \Delta(e_2)=e_1\otimes e_2+e_2\otimes e_1+e_2\otimes e_2+e_3\otimes e_2+e_2\otimes e_3+e_3\otimes e_3,\\
    \varepsilon(e_3)&=0, \quad\Delta(e_3)=e_1\otimes e_3+e_3\otimes e_1.
    \end{aligned}$
    \item[(7)]
    $\begin{aligned}[t]
    \varepsilon(e_1)&=1, \quad \Delta(e_1)=e_1\otimes e_1,\\
    \varepsilon(e_2)&=0,\quad \Delta(e_2)=e_1\otimes e_2+e_2\otimes e_1+e_2\otimes e_2+e_3\otimes e_3,\\
    \varepsilon(e_3)&=0, \quad\Delta(e_3)=e_1\otimes e_3+e_3\otimes e_1+e_3\otimes e_2+e_2\otimes e_3.
    \end{aligned}$
\end{enumerate}
\end{theorem}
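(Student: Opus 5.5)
By Theorem~\ref{xxthm3.6} and Theorem~\ref{xxthm3.7}, isomorphism classes of bialgebra structures on $\Bbbk^{\oplus 3}$ are in bijection with isomorphism classes of monoids of order $3$. The bialgebra attached to a monoid $S$ has $\varepsilon(e_k)=\delta_{1k}$ and $\Delta(e_k)=\sum_{i\cdot j=k}e_i\otimes e_j$. So the statement reduces to two tasks: classify monoids $S=\{1,a,b\}$ with identity $1$ up to isomorphism, and then read off the coproducts from their multiplication tables. I will write $a=2$ and $b=3$, so that the basis vectors are $e_1,e_2,e_3$.

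First I will prove there are exactly $7$ such monoids. The case split is on whether the group of units $U$ has size $3$, $2$ or $1$.
\begin{enumerate}
\item[(i)] If $|U|=3$, then $S\cong \mathbb{Z}/3$. This gives item (1): $2\cdot 3=1$, $2\cdot 2=3$, $3\cdot 3=2$.
\item[(ii)] If $|U|=2$, say $U=\{1,2\}\cong\mathbb{Z}/2$, then $3$ is a non-unit. Hence $2\cdot 3$, $3\cdot 2$ and $3\cdot 3$ must all equal $3$, since an ideal complement argument shows the non-units form an ideal. This gives item (2).
\item[(iii)] If $|U|=1$, then $\{2,3\}$ is a subsemigroup, because no product of non-units equals $1$. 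Then $S=T^1$ for a semigroup $T$ of order $2$.
\end{enumerate}

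In case (iii), the semigroups of order $2$ up to isomorphism are five:
\begin{enumerate}
\item[$\bullet$] the left zero band ($xy=x$);
\item[$\bullet$] the right zero band ($xy=y$);
\item[$\bullet$] the semilattice $\{2,3\}$ with $2\cdot3=3\cdot2=3\cdot3=3$ and $2\cdot 2=2$;
\item[$\bullet$] the null semigroup ($xy=3$ for all $x,y$);
\item[$\bullet$] $\mathbb{Z}/2$ with identity $2$.
\end{enumerate}
I will check these by a short direct enumeration of the $16$ binary operations on two letters up to relabelling, discarding the non-associative ones. Adjoining an identity to each gives items (3) through (7).
\begin{enumerate}
\item[$\bullet$] Item (3): $\Delta(e_2)$ contains $e_2\otimes e_2$ and $e_2\otimes e_3$, so $2\cdot x=2$. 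This is the left zero band.
\item[$\bullet$] Item (4): this is the right zero band.
\item[$\bullet$] Item (5): $3$ is idempotent and $2$ absorbs. This is the semilattice.
\item[$\bullet$] Item (6): this is the null semigroup with zero $2$.
\item[$\bullet$] Item (7): this is $\mathbb{Z}/2$ with identity $2$ and generator $3$.
\end{enumerate}
In each case I will verify that the displayed $\Delta$ is exactly $\sum_{i\cdot j=k}e_i\otimes e_j$ for the corresponding table.

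Finally, I need pairwise non-isomorphism of the seven monoids, which by Theorem~\ref{xxthm3.7} gives pairwise non-isomorphism of the bialgebras. Monoids with unit groups of different sizes are not isomorphic, so items (1), (2) and (3)--(7) are separated. Two monoids $T^1$ and $T'^1$ with trivial unit group are isomorphic exactly when $T\cong T'$, since an isomorphism must fix $1$. The five semigroups of order $2$ are distinguished by simple invariants:
\begin{enumerate}
\item[$\bullet$] the number of idempotents;
\item[$\bullet$] commutativity;
\item[$\bullet$] the existence of an identity or of a zero.
\end{enumerate}
Note that the left and right zero bands are anti-isomorphic but not isomorphic, which is why items (3) and (4) are both present.

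The main obstacle is the completeness of the enumeration in case (iii), specifically the classification of $2$-element semigroups and the argument that no further monoids arise. I plan to handle it by brute force: fix $x=2\cdot 2\in\{2,3\}$, split on the cases, and impose associativity on the few remaining entries.
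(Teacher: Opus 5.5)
Your proposal is correct, and its first step is the same as the paper's: both use Theorems~\ref{xxthm3.6} and~\ref{xxthm3.7} to reduce the statement to classifying monoids of order $3$ with identity $1$. The difference is in the second step.

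\begin{itemize}
\item The paper cites Tamura's 1953 enumeration for the count of $7$ and lists the resulting coproducts.
\item You derive the classification yourself. You split on the size of the unit group, and in the case $|U|=1$ you reduce to $T^1$ for the five semigroups of order $2$.
\end{itemize}

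Your route is self-contained. It matches each displayed $\Delta$ to an explicit multiplication table, and all seven of your identifications are right. It also proves pairwise non-isomorphism, which the paper only inherits implicitly from the cited count. The paper's route is shorter but rests entirely on the reference. Strictly, the statement only asserts that every bialgebra is isomorphic to one in the list, so non-redundancy is a bonus rather than a requirement.

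Two small points should be made explicit when you write it up.

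First, cases (ii) and (iii) both rest on the fact that in a finite monoid $xy=1$ implies $yx=1$. To prove it:
\begin{itemize}
\item Left multiplication by $y$ is injective, since $x(ys)=s$.
\item Hence it is bijective, so $yt=1$ for some $t$.
\item Then $x=x(yt)=(xy)t=t$.
\end{itemize}
It follows that if $xs$ or $sx$ is a unit, then $x$ is a unit. This is exactly the ``non-units form an ideal'' claim in (ii). It is also what makes $\{2,3\}$ closed in (iii).

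Second, the invariants you list do not separate the left zero band from the right zero band. Both have two idempotents, both are noncommutative, and neither has an identity or a zero. Add a one-line argument: a homomorphism $\phi$ from the left zero band to the right zero band satisfies $\phi(x)=\phi(xy)=\phi(x)\phi(y)=\phi(y)$ for all $x,y$, so $\phi$ cannot be injective.
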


\begin{proof}
    By Theorem \ref{xxthm3.7}, we only need to decide the Cayley table for the monoid $S=\{1,2,3\}$.
    Without loss of generality, let $1$ be the identity in the monoid $S$. Then by \cite{Ta1953},
    there are 7 monoids on $S$ up to isomorphic, and the corresponding bialgebra structures are listed.
\end{proof}

\begin{remark}
    \begin{enumerate}
        \item     Case 3 and Case 4 are non-cocommutative while others are cocommutative.
        \item     Except for Case 3, Case 4 and Case 6, others are co-semisimple.
    \end{enumerate}
\end{remark}

In $\mathcal{B}=\mathrm{Mod}_\Bbbk (B)$, there are three simple objects: $V_i=Be_i(i=1,2,3)$. For $i=1,\cdots,7$, 
let $K_i$ be the Grothendieck ring of the bialgebra $B$ in Theorem \ref{thm:1} case (i). 
Note that $B$ is a bialgebra, and $\dim_\Bbbk V_i=1(i=1,2,3)$, then for any $i,j=1,2,3$,
  $\dim_\Bbbk (V_i\otimes_\Bbbk V_j)=1$, thus $V_i\otimes_\Bbbk V_j$ is one of $V_1,V_2,V_3$.
Therefore, $V_i\otimes_\Bbbk V_j\cong V_k$
if and only if $a_{ij}^k=1$.
Then we have the following proposition.

\begin{proposition}\label{xxpro5.3}
Notes as above, the isomorphism classes among the $K_i$'s are exactly:
\begin{enumerate}
    \item $K_1\cong \mathbb Z[C_3]$.
    \item $K_2\cong K_7\cong \mathbb Z[C_2]\times \mathbb Z$.
    \item $K_3,\ K_4 \text{ are noncommutative and } K_3\not\cong K_4$.
    \item $K_5\cong \mathbb Z^3$.
    \item $K_6\cong \mathbb Z[t]/(t^2)\times \mathbb Z$.
\end{enumerate}
Hence among $K_1,\dots,K_7$ the only nontrivial isomorphism is $K_2\cong K_7$; all other pairs are pairwise non-isomorphic.
\end{proposition}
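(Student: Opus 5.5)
The plan is to identify each $K_i$ with the integral monoid ring $\mathbb Z[S_i]$ and then decompose these rings. By the remark preceding the proposition, $[V_i][V_j]=[V_{i\cdot j}]$, so $K_i\cong\mathbb Z[S_i]$, where $S_i$ is the monoid attached to case $(i)$ of Theorem \ref{thm:1} via Theorem \ref{xxthm3.6}. I would read off the Cayley tables from the coproducts. Throughout, $1$ is the identity.

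Case $(1)$: $2\cdot2=3$, $3\cdot3=2$, $2\cdot3=1$, so $S_1\cong C_3$.

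Case $(2)$: $2\cdot2=1$, and $3$ absorbs everything. So $S_2=C_2\cup\{0\}$.

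Case $(7)$: $2$ is idempotent, $2\cdot3=3$, $3\cdot3=2$. So $\{2,3\}\cong C_2$ with identity $2$, and an extra identity $1$ adjoined.

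Case $(5)$: the chain semilattice $1>3>2$.

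Case $(6)$: $2$ is a zero and $3^2=2$.

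Cases $(3)$ and $(4)$: $\{2,3\}$ is a left-zero band, respectively a right-zero band, with $1$ adjoined.

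The commutative cases are all handled by splitting along a central idempotent $z$, using $\mathbb Z[S]\cong z\mathbb Z[S]\times(1-z)\mathbb Z[S]$.

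For $(2)$, take $z=3$. This gives $\mathbb Z\times\mathbb Z[C_2]$.

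For $(7)$, take $z=2$. Then $z\mathbb Z[S]=\operatorname{span}\{2,3\}\cong\mathbb Z[C_2]$ and $(1-z)\mathbb Z[S]\cong\mathbb Z$. Hence $K_2\cong K_7$.

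For $(5)$, the orthogonal idempotents $2$, $3-2$, $1-3$ give $\mathbb Z^3$.

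For $(6)$, take $z=2$ and $t=3-2$. Then $t^2=0$, which gives $\mathbb Z\times\mathbb Z[t]/(t^2)$.

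To separate the commutative rings from each other, I would use cheap invariants:
\begin{itemize}
\item nilpotents: only $K_6$ has a nonzero nilpotent;
\item the number of idempotents, or equivalently the number of ring maps to $\mathbb Z$: $8$ for $\mathbb Z^3$, $4$ for $\mathbb Z[C_2]\times\mathbb Z$, $2$ for $\mathbb Z[C_3]$;
\item for $\mathbb Z[C_3]$, connectedness: it has no nontrivial idempotents.
\end{itemize}
$K_3$ and $K_4$ are noncommutative, so they differ from all the others.

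The main obstacle is the claim $K_3\not\cong K_4$. When I tried to find an invariant distinguishing them, I instead found an isomorphism, so I believe this part of the statement is false as written. In $K_4$ the relations are $2\cdot3=3$, $3\cdot2=2$, and $2,3$ idempotent. Put $u=1-2$ and $v=1-3$. Then $u^2=u$ and $v^2=v$, and
\begin{align*}
uv&=1-2-3+2\cdot3=1-2=u,\\
vu&=1-3-2+3\cdot2=1-3=v.
\end{align*}
These are exactly the left-zero relations of $S_3$. So $1\mapsto1$, $2\mapsto1-2$, $3\mapsto1-3$ defines a ring map $K_3\to K_4$. Its matrix is unimodular, so it is an isomorphism.

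This is consistent with a general fact: for a band $T$ with an identity adjoined, $x\mapsto1-x$ on $T$ converts left-zero relations into right-zero ones. Abstractly, $K_4\cong K_3^{\mathrm{op}}$, and the map above shows $K_3$ is also isomorphic to its own opposite. So my plan would prove items $(1)$, $(2)$, $(4)$, $(5)$ and the noncommutativity in $(3)$. It would replace ``$K_3\not\cong K_4$'' by $K_3\cong K_4$. In that case the nontrivial isomorphisms are $K_2\cong K_7$ and $K_3\cong K_4$, not only the former.
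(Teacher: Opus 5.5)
Your computation is right, and on item (3) it is the paper's argument, not yours, that fails: the proposition is false as stated.

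\textbf{Items (1), (2), (4), (5).} For these and for the noncommutativity of $K_3,K_4$, your route is the paper's. You identify $K_i$ with the monoid ring $\mathbb Z[S_i]$ and read the Cayley tables off the coproducts. You then split along idempotents: $[V_3]$ in case (2), $[V_2]$ in case (7), the orthogonal family $[V_2],[V_3]-[V_2],1-[V_3]$ in case (5), and $[V_2]$ together with $t=[V_3]-[V_2]$ in case (6). Finally you separate the commutative rings by nilpotents and idempotents.

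There is one slip in that last step. The number of idempotents is \emph{not} ``equivalently'' the number of ring maps to $\mathbb Z$. Both $\mathbb Z^3$ and $\mathbb Z[C_2]\times\mathbb Z$ admit exactly three unital ring homomorphisms to $\mathbb Z$, so that invariant does not separate $K_5$ from $K_2$. Drop the parenthetical. The idempotent counts $8,4,2$ themselves are correct and suffice; the paper equivalently uses the number of primitive idempotents.

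\textbf{Item (3).} The paper claims an isomorphism $K_3\to K_4$ ``would have to send the two non-unit idempotent generators to themselves (up to permutation).'' Nothing forces a ring isomorphism to preserve the basis of simple classes, and $1-[V_2]$, $1-[V_3]$ are idempotents as well.

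In $K_4$ we have $[V_2][V_3]=[V_3]$ and $[V_3][V_2]=[V_2]$. Your check gives
\[
(1-[V_2])(1-[V_3])=1-[V_2],\qquad (1-[V_3])(1-[V_2])=1-[V_3],
\]
which are exactly the left-zero relations of $K_3$. Hence $1\mapsto 1$, $[V_i]\mapsto 1-[V_i]$ for $i=2,3$, is a unital ring map $K_3\to K_4$. Its matrix has determinant $1$, so it is an isomorphism. Structurally, $K_3$ is the ring of $2\times 2$ triangular integer matrices, with matrix units $[V_2]$, $1-[V_2]$, $[V_3]-[V_2]$. That ring is isomorphic to its opposite, and $K_4\cong K_3^{\mathrm{op}}$.

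So the isomorphism classes are $\{K_1\},\{K_2,K_7\},\{K_3,K_4\},\{K_5\},\{K_6\}$, as you say. The statement cannot be rescued by reading the $K_i$ as based rings, i.e.\ requiring isomorphisms to preserve $\{[V_1],[V_2],[V_3]\}$. Under that reading $K_3\not\cong K_4$ does hold, but $K_2\not\cong K_7$ also holds, since $S_2$ has a zero element and $S_7$ does not. Under either reading, the closing claim that $K_2\cong K_7$ is the only nontrivial isomorphism is wrong.
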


\begin{proof}
We can get the multiplication laws from the comultiplications in Theorem~\ref{thm:1}. For example, in case (1) we get
\[
[V_2]^2=[V_3],\quad [V_2][V_3]=[V_1],\quad [V_3]^2=[V_2],
\]
so $K_1\cong \mathbb Z[C_3]$. In case (2), $[V_2]^2=[V_1]$ and $[V_3]$ is absorbing, hence $K_2\cong \mathbb Z[C_2]\times \mathbb Z$. In case (7), the set $\{[V_2],[V_3]\}$ forms a cyclic group of order 2 with identity $[V_2]$, and $[V_1]-[V_2]$ is a complementary idempotent, so $K_7\cong \mathbb Z[C_2]\times \mathbb Z$, thus $K_2\cong K_7$.

For case (3), we have
\[
[V_2]^2=[V_2],\quad [V_3]^2=[V_3],\quad [V_2][V_3]=[V_2],\quad [V_3][V_2]=[V_3],
\]
and for case (4),
\[
[V_2]^2=[V_2],\quad [V_3]^2=[V_3],\quad [V_2][V_3]=[V_3],\quad [V_3][V_2]=[V_2].
\]
Both rings are noncommutative, so they are distinct from all commutative rings. They are not isomorphic to each other because an isomorphism would have to send the two non‑unit idempotent generators to themselves (up to permutation) and preserve the relation $ab=a$ or $ab=b$, which is impossible.

In case (5), the multiplication is commutative and the semigroup is a semilattice with an absorbing element, hence its $\mathbb Z$-algebra is $\mathbb Z^3$. In case (6), we have $[V_2]$ absorbing and $[V_3]^2=[V_2]$, so $([V_3]-[V_2])^2=0$; thus $K_6\cong \mathbb Z[t]/(t^2)\times \mathbb Z$.

Finally, to distinguish the remaining commutative rings: $K_6$ contains a nonzero nilpotent, whereas $K_2,K_5,K_7$ are reduced. Also, $K_2$ and $K_7$ have exactly two primitive idempotents (since $\mathbb Z[C_2]$ has only trivial idempotents over $\mathbb Z$), while $K_5\cong \mathbb Z^3$ has three. Therefore $K_2,K_7\not\cong K_5$.

Thus the only isomorphism is $K_2\cong K_7$, and all other pairs are non-isomorphic.
\end{proof}

As for the weak bialgebra structure, if $\varepsilon(1_B)=1$, $B$ is a bialgebra.
 If $\varepsilon(1_B)=3$, by Corollary \ref{xxcor4.5}, $B$ is a weak bialgebra with each $e_i$ is a group-like elements.

 The only case left is $\varepsilon(1_B)=2$.
 Without loss of generality, suppose $\varepsilon(e_1)=\varepsilon(e_2)=1$ and $\varepsilon(e_3)=0$.
 By Theorem \ref{thm:2}, construct a category $\mathcal{C}$ with two object $\{O_1,O_2\}$. Then $e_i(i=1,2)$ corresponds to the identity morphism in $\Hom (O_i,O_i)$. As for $e_3$, it corresponds to a morphism (still named by $e_3$) in one of the following four homomorphism spaces \[\Hom(O_1,O_1),\Hom(O_1,O_2),\Hom(O_2,O_1),\Hom(O_2,O_2).\]
 By symmetry, there are two cases: $e_3\in \Hom(O_1,O_1)$ or $e_3\in \Hom(O_1,O_2)$.

 If $e_3\in \Hom(O_1,O_2)$, then $e_2\circ e_3=e_3\circ e_1=e_3$, and in $B$, we have
 \[\Delta(e_1)=e_1\otimes e_1, \Delta(e_2)=e_2\otimes e_2, \Delta(e_3)=e_2\otimes e_3+e_3\otimes e_1.\]
 If $e_3\in \Hom(O_1,O_1)$, then $\{e_1,e_3\}$ spans a two dimensional bialgebra $B_1$ and $\{e_2\}$ spans a one-dimensional bialgebra $B_2$. Thus $B\cong B_1\oplus B_2$. By Lemma \ref{xxlem2.3}, $B_1$ has two non-isomorphic bialgebra structures.
\begin{theorem}\label{xxthm5.4}
    Let $B=\Bbbk^{\oplus3}=\Bbbk e_1\oplus\Bbbk e_2\oplus\Bbbk e_3$ be a weak bialgebra, and $B_s$ and $B_t$ be the source and target subalgebras. Then $B$ is isomorphic to one of the followings.
    \begin{enumerate}
        \item If $\dim B_s=\dim B_t=1$, then $B$ is a bialgebra, and 7 structures are listed in Theorem
        \ref{thm:1}.
        \item If $\dim B_s=\dim B_t=3$, then $e_1$, $e_2$, $e_3$ are group-like elements.
        \item If $\dim B_s=\dim B_t=2$, 
        then the 3 structures are listed above.
    \end{enumerate}
    As a result, there are 11 non-isomorphic weak bialgebra structures on $B$.
\end{theorem}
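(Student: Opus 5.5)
The proof splits according to $m=\varepsilon(1_B)\in\{1,2,3\}$. By Lemma \ref{xxlem4.2} each $\varepsilon(e_k)\in\{0,1\}$, so $m$ is the number of counital idempotents. As remarked after that lemma, $\dim B_s=\dim B_t=m$. I would justify this directly by computing $\varepsilon_t(e_k)=\sum \varepsilon(1_{(1)}e_k)1_{(2)}$ from $\Delta(1)=\sum_k\Delta(e_k)$, and checking that its image is spanned by $e_1,\dots,e_m$, i.e. by the images of the identities under source and target. The three cases of the statement then correspond exactly to $m=1,3,2$, and isomorphism classes in different cases are distinct because $m$ is an isomorphism invariant.

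\textbf{Case $m=1$.} Here $\varepsilon$ is an algebra map, since it is multiplicative on the orthogonal idempotents. The category attached by Theorem \ref{thm:2} has one object, so every pair of basis morphisms is composable. Hence $\Delta(1)=1\otimes 1$, and $B$ is a bialgebra. Theorem \ref{thm:1} then gives exactly 7 classes.

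\textbf{Case $m=3$.} By Corollary \ref{xxcor4.5}, $V=0$ and every $e_i$ is group-like. This is the discrete category on three objects, and it gives a single structure. I would check the weak axioms briefly via the converse half of Theorem \ref{thm:2}.

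\textbf{Case $m=2$.} Take $\varepsilon(e_1)=\varepsilon(e_2)=1$ and $\varepsilon(e_3)=0$. By Theorem \ref{thm:2} this corresponds to a category with objects $O_1,O_2$ and one extra morphism $e_3$. Isomorphisms of weak bialgebras permute the idempotents, as in the proof of Theorem \ref{xxthm3.7}, so classes correspond to isomorphism classes of such categories. Swapping $O_1,O_2$ leaves two situations.

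If $e_3\colon O_1\to O_2$, composition is forced, and there is exactly one structure.

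If $e_3$ is an endomorphism of $O_1$, then $\{e_1,e_3\}$ is a two-element monoid, and there are two such monoids, $e_3^2=e_1$ or $e_3^2=e_3$. By Theorem \ref{xxthm3.7}, or Proposition \ref{xxlem2.3}, these give two nonisomorphic bialgebras $B_1$, and $B=B_1\oplus\Bbbk e_2$.

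Any isomorphism must fix the object whose endomorphism monoid has two elements. Comparing the monoids $\mathrm{End}(O_1)$ therefore separates the last two structures from each other, and the hom-set shape separates both from the first. This gives 3 structures. The total is $7+1+3=11$.

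\textbf{Main obstacle.} The hard part is making the passage to categories rigorous enough that weak bialgebra isomorphisms correspond to category isomorphisms. For this I would mimic Theorem \ref{xxthm3.7}. An algebra automorphism of $\Bbbk^{\oplus n}$ is a permutation of the $e_i$, and it preserves $\varepsilon$ and the supports of $\Delta$. Therefore it preserves identities, sources and targets (read off from $(\varepsilon\otimes\mathrm{id})\Delta$), and composition. I would also confirm that $\dim B_s=m$ via the explicit formula for $\varepsilon_s$.
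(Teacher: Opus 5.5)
Your proposal is correct and follows essentially the same route as the paper. You split by the number $m$ of counital idempotents, use Theorem~\ref{thm:1} for $m=1$ and Corollary~\ref{xxcor4.5} for $m=3$, and apply the category correspondence of Theorem~\ref{thm:2} with the same two subcases for $m=2$; you also spell out the pairwise non-isomorphism of the resulting classes, which the paper leaves implicit.

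One small correction concerns $B_t=\varepsilon_t(B)$. It is not spanned by $e_1,\dots,e_m$ themselves; for $m=1$ it is $\Bbbk 1_B$. It is spanned by the $m$ linearly independent elements $\varepsilon_t(e_1),\dots,\varepsilon_t(e_m)$, where each $\varepsilon_t(e_i)$ is the sum of the $e_h$ over all morphisms $h$ with target $i$, and $\varepsilon_t(e_k)=0$ for $k>m$. This still gives $\dim B_t=m$.
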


\subsection*{Acknowledgments} 
Jingheng Zhou was partially supported by NSFC (No. 12301053).

\bibliographystyle{plain}

\end{document}